\documentclass[11pt,a4paper,reqno]{amsart}
\usepackage{amscd,amsmath}
\usepackage{amssymb,amsmath,latexsym,color,enumitem,tikz,dsfont,tikz-cd}
\usepackage[all]{xypic}       
\usepackage[varg]{pxfonts}
\usepackage{tabu,adjustbox,booktabs}
\usepackage{quiver}
\usepackage{graphicx}
 \usepackage{comment}
\usepackage{hyperref}

\usetikzlibrary{decorations.pathmorphing,shapes}

\usetikzlibrary{decorations.markings}
\tikzset{negated/.style={
        decoration={markings,
            mark= at position 0.5 with {
                \node[transform shape] (tempnode) {$\backslash$};
            }
        },
        postaction={decorate}
    }
}

 \newcommand{\newuparrow}{{{\rlap{$\ $}\hbox{$\uparrow$}}}}
 \newcommand{\twoheaddownarrow}{{\rlap{\rlap{$\ $}\raise .25ex\hbox{$\downarrow$}}\raise-.25ex\hbox{$\downarrow$}}}
 \newcommand{\twoheaduparrow}{{\rlap{\rlap{$\ $}\raise .25ex\hbox{$\uparrow$}}\raise-.25ex\hbox{$\uparrow$}}}

\newcommand{\set}[1]{\{\,#1\,\}}
\newcommand{\bigset}[1]{\bigl\{\,#1\,\bigr\}}
\newcommand{\tbigwedge}{\mathop{\textstyle \bigwedge }}
\newcommand{\tbigsqcap}{\mathop{\textstyle \bigsqcap }}
\newcommand{\tbigcap}{\mathop{\textstyle \bigcap }}
\newcommand{\tbigcup}{\mathop{\textstyle \bigcup }}

\newcommand{\tbigvee}{\mathop{\textstyle \bigvee }}
 \newcommand{\cat}[1]{\ensuremath{\mathsf{#1}}} 
 
\makeatletter
\newcommand*{\@old@slash}{}\let\@old@slash\slash
\def\slash{\relax\ifmmode\delimiter"502F30E\mathopen{}\else\@old@slash\fi}
\makeatother

\def\SS{\mathsf{S}}

\def\cl{\mathfrak{c}}
\def\op{\mathfrak{o}}

\newtheorem{theorem}{Theorem}[section]
\newtheorem{proposition}[theorem]{Proposition}

\newtheorem{lemma}[theorem]{Lemma}
\newtheorem{corollary}[theorem]{Corollary}

\theoremstyle{definition}

\newtheorem{example}[theorem]{Example}

\theoremstyle{remark}

\newcommand{\ca}[1]{\mathcal{#1}}

\newcommand{\mf}{\mathsf}

\newcommand{\mi}{\mathit}

\newcommand{\se}{\subseteq}

\newcommand{\we}{\wedge}
\newcommand{\ve}{\vee}

\newcommand{\bca}{\bigcap}

\newcommand{\sll}{\mf{S}(L)}
\newcommand{\So}{\mf{S}_{\op}}
\newcommand{\Sc}{\mf{S}_{\cl}}
\newcommand{\Sb}{\mf{S}_b}

\newcommand{\ra}{\rightarrow}

\newcommand{\up}{{\uparrow}}

\begin{document}
\title[The lattice of smooth sublocales as a Bruns--Lakser completion]{The lattice of smooth sublocales as a Bruns--Lakser completion}
 \author[I.~Arrieta \and A.~L.~Suarez]
{Igor~Arrieta* \and Anna Laura Suarez**}

\newcommand{\acr}{\newline\indent}

\address{\llap{*\,}Departamento de Matem\'aticas\acr
Universidad del Pa\'{\i}s Vasco UPV/EHU\acr
48080 Bilbao\acr
SPAIN}
\email{igor.arrieta@ehu.eus}

\address{\llap{**\,}Department of Mathematics and Applied Mathematics\acr
University of the Western Cape\acr
Private Bag X17 Bellville 7535, Cape Town\acr
SOUTH AFRICA}
\email{annalaurasuarez993@gmail.com}

\keywords{Locale, frame, coframe, joins of closed sublocales, smooth sublocales, Bruns--Lakser completion, exact filter, Funayama envelope, admissible meet}
\subjclass[2020]{18F70 (primary); 06D22 (secondary)}
\thanks{\emph{Acknowledgement.} The results of this work were obtained during a visit of the second-named author to the first-named author, supported by the Basque Government grants IT1483-22 and IT1913-26. The first-named author also acknowledges support from the Basque Government through grants IT1483-22 and IT1913-26, and a postdoctoral fellowship (grant POS-2022-1-0015 and POS-2025-2-0019).}

\begin{abstract}
    We characterise the frame morphisms $f:L\to M$ that lift to frame maps $\overline{f}:\Sb(L)\to \Sb(M)$, where $\Sb(L)$ is the collection of joins of complemented sublocales of a frame $L$, or equivalently the Booleanization of the collection $\sll$ of all its sublocales. We do so by proving that $\Sb(L)$ is isomorphic to the Bruns--Lakser completion of the meet-semilattice formed by the locally closed sublocales, i.e. the sublocales of the form $\cl(a)\cap \op(b)$ for $a,b\in L$.
\end{abstract}

\maketitle

\tableofcontents

\section{Introduction}

For a locale $L$, the collection $\SS_b(L)$ consisting of joins of locally closed sublocales (that is, those sublocales of the form $\tbigvee_{a\in A,b\in B} \cl(a)\cap\op(b)$ for $A,B\subseteq L$) is a complete Boolean algebra  (see \cite{Arrieta2022}). This was shown in \cite{BGJ2013} to be isomorphic to a construction by Funayama introduced in \cite{funayama59}, sometimes referred to as the \emph{Funayama envelope}.
This collection has attracted attention in point-free topology (e.g., the naturality of the construction as a maximal essential extension \cite{ball2018}, its role as
a discretization of a locale for modeling non-continuous localic maps \cite{DISCB}, its role as the $T_D$--hull of a frame \cite{bezhanishvili2025td}, etc). 
Closely related to this collection there is the collection $\SS_c(L)$ of joins of closed sublocales \cite{PicadoPultrTozzi2019}, which sits inside $\SS_b(L)$ as a subframe. When $L$ is subfit, and only in that case, the collections $\SS_c(L)$ and $\SS_b(L)$ coincide.

Neither of the constructions $L\mapsto \SS_b(L)$ or $L\mapsto \SS_c(L)$ is functorial in the category of frames \cite{ball2019,Arrieta2022} --- see also \cite[Section~7.7]{ArrietaPhD2022} for some positive results.

In this paper we characterise the frame morphisms that lift with respect to the $\Sb(-)$ construction. The characterization will follow from the main result in our paper: if we denote by $\SS_{lc}(L)$ the meet-semilattice of locally closed sublocales, then inclusion $\mf{S}_{lc}^{op}(L)\subseteq \SS_b(L)$ is, up to isomorphism, the \emph{Bruns--Lakser completion} of $\mf{S}_{lc}^{op}(L)$. Again, we note in parallel that $\cl:L\to \Sc(L)$, too, is the Bruns--Lakser completion of $L$, viewed as a join-semilattice.

Given the strong analogy between the $\Sb(-)$ and the $\Sc(-)$ construction, here and in our work we will illustrate the results with reference to this. A frame morphism $f:L\to M$ lifting to the $\Sc(-)$ construction means there being a frame map $\overline{f}:\Sc(L)\to \Sc(M)$ with $\overline{f}(\cl(x))=\cl(f(x))$ for all $x\in L$. A morphism $f$ lifts if and only if the following well-definedness condition holds: 
\begin{equation*}
\cl(x)\se \tbigvee_i \cl(x_i)\text{ implies }\cl(f(x)) \se \tbigvee_i \cl(f(x_i)).   \tag{WDc} \label{e: wdc}
\end{equation*}
 This is an instance of a much more general fact. The \emph{Bruns--Lakser completion} \cite{BrunsLakser1970} of a join-semilattice $S$ is the embedding $\up:S\to \ca{AU}(S)$ into the \emph{admissible upper sets}, namely those closed under admissible meets (i.e. the existing meets that distribute over all binary joins). The Bruns--Lakser completion is always a frame, and we can characterise maps $f:S\to T$ of join-semilattices that can be lifted to frame homomorphisms $\overline{f}\colon\ca{AU}(S)\to\ca{AU}(T)$. For an upper set $U$, we call $\ca{A}(U)$ the smallest admissible upper set containing it. This is equivalent to the general well-definedness condition
\begin{equation*}
x\in \ca{A} (\tbigcup_i U_i)\text{ implies }f(x)\in \ca{A}(\tbigcup_i \up f[U_i]).   \tag{WD} \label{e: wd'}
\end{equation*}

It is known that the morphisms that do lift are precisely the so-called admissible morphisms.

If the join-semilattice under consideration is a frame, admissible meets are called \emph{exact}\footnote{For the purpose of this paper, and in agreement with the work in \cite{moshier22}, we can think of exactness of a meet $\bigwedge_i x_i\in L$ as being the property that $\cl(\bigwedge_i x_i)=\bigvee_i \cl(x_i)$, which in a a frame is equivalent to admissibility.} and admissible morphisms, too, are called exact. It is known that $\cl:L\to \Sc(L)$ is the Bruns--Lakser completion of $L$. The condition $\cl(x)\se \tbigvee_i \cl(x_i)$ is equivalent to $x\in \ca{A}(\up \{x_i\mid i\in I\})$; \eqref{e: wdc} is then seen as a special case of \eqref{e: wd'}. Indeed, \eqref{e: wdc} is equivalent to having that $f:L\to M$ is an exact morphism.

In this work, we show that the problem of lifting morphisms to the $\Sb(L)$ construction is another special instance of the general result. 
Here, a morphism lifts if there is a frame map $\overline{f}$ making the following commute.
\[
\begin{tikzcd}
	{\SS_b(L)} & {\SS_b(M)} \\
	L & M
	\arrow["{{\overline{f}}}", from=1-1, to=1-2]
	\arrow["{{\op_L}}"', from=2-1, to=1-1]
	\arrow["f", from=2-1, to=2-2]
	\arrow["{{\op_M}}", from=2-2, to=1-2]
\end{tikzcd}
\label{square that has to commute}
\]

In this case, too, there being such a frame map amounts to a well-definedness condition:

\begin{equation*}
\cl(x)\cap \op(y)\se \tbigvee_i \cl(x_i)\cap \op(y_i) \text{ implies }\cl(f(x))\cap \op(f(y))\se \tbigvee_i \cl(f(x_i))\cap \op(f(y_i)).   \tag{WDb} \label{e: wdb}
\end{equation*}

In this work, we exhibit this as another instance of \eqref{e: wd'}. We first define a collection $\mf{LC}(L)\se L\times L^{op}$, anti-isomorphic to the meet-semilattice $\mf{S}_{lc}(L)$ of locally closed sublocales. We will then show that the admissible meets of $\mf{LC}(L)$ are exactly the joins of locally closed sublocales which are themselves locally closed. We will deduce that $\mf{LC}(L)\se \Sb(L)$ is isomorphic to the Bruns--Lakser completion of $\mf{LC}(L)$. We will prove as a consequence of this that the frame maps $f:L\to M$ that lift coincide with those that determine maps $\mf{LC}(f):\mf{LC}(L)\to \mf{LC}(M)$ of join-semilattices which are admissible (where $\mf{LC}(f):\cl(x)\cap \op(y)\mapsto \cl(f(x))\cap \op(f(y))$). We will also characterise such maps explicitly. 

We can even compare the situations for $\Sc(L)$ and $\Sb(L)$ with those for $\mf{S}_o(L)$, the collection of intersections of open sublocales, and $\mf{S}(L)$, the collection of all sublocales. In both cases all frame maps lift. For intersections of open sublocales, the well-definedness condition becomes:

\begin{equation*}
\bca_i \op(x_i)\se \op(x)\text{ implies }\bca_i \op(f(x_i))\se \op(f(x)).   \tag{WDo} \label{e: wdo}
\end{equation*}
This is equivalent to the frame map preserving all the meets $\bigwedge_i x_i\in L$ with $\bca_i \op(x_i)=\op(\bigwedge_i x_i)$. But these are known to be the \emph{strongly exact} meets, also known as \emph{free} meets (see \cite{wilson94}), and are well known to be preserved by any frame map, see \cite[3.5]{notesball}.
For $f:L\to M$ to lift to a coframe map $f:\mf{S}(L)\to \mf{S}(M)$, it is enough for the well-definedness condition below to hold.

\begin{equation*}
\bca_i \cl(x_i)\ve \op(y_i)\se \cl(x)\ve \op(y)\text{ implies }\bca_i \op(f(x_i))\ve \cl(f(y_i))\se \op(f(x))\ve \cl(f(y)).   \tag{WDs} \label{e: wds}
\end{equation*}

This is again equivalent to $\mf{LC}(f):\mf{LC}(L)\to \mf{LC}(M)$ preserving those intersections of locally closed sublocales which are locally closed, a condition satisfied by every frame map. What is missing is a characterisation of such intersections as joins of $\mf{LC}(L)$, in the same vein as the characterisation of strongly exact meets as those preserved by all frame maps, or even a purely algebraic characterisation as the one for locally closed meets of $\mf{LC}(L)$.

We have already highlighted the analogies between the problems of lifting morphisms for the $\mf{S}(L)$, $\So(L)$, $\Sc(L)$, and $\Sb(L)$ constructions. In light of these, it is natural to ask how all these can be objects of a more general study. Our proofs in this work go in the direction of this more general study, which will be the topic of an upcoming paper.

The paper is organised as follows. Section~\ref{sec.prel} collects the necessary preliminaries. In Section~\ref{sec.lc} we introduce the collection $\mathsf{LC}(L)\subseteq L\times L^{op}$ that is anti-isomorphic to the meet-semilattice of locally closed sublocales and establish its basic properties. In Section~\ref{sec.4} we characterise admissible meets $\tbigwedge{(a_i,b_i)}_{i\in I}$ in $\mathsf{LC}(L)$ as those for which $\bigvee_{i\in I}\cl(a_i)\cap\op(b_i)$ is locally closed. Section~\ref{sec.5} identifies $\SS_b(L)$ with the Bruns–Lakser completion of $\mathsf{LC}(L)$. Finally, Section~\ref{sec.lifts} characterises those maps between join-semilattices that admit a lifting to the Bruns–Lakser completion. As a by-product, we characterise the frame homomorphisms that lift to the construction $\SS_b(-)$.

\section{Preliminaries}\label{sec.prel}
Our notation and terminology regarding the categories of frames and locales will be that of \cite{PP12} (see also \cite{STONE}). The Heyting operator in a frame $L$, right adjoint to the meet operator, will be denoted by $\to$; for each $a\in L$, $a^*=a\to 0$ is the pseudocomplement of $a$. 

\subsection{Some Heyting rules}
\addtocontents{toc}{\protect\setcounter{tocdepth}{-1}}
For the reader's convenience, we list here some of the properties satisfied by the Heyting operator in a frame $L$. For any $a,b,c\in L$ and any $\{a_i\}_{i\in I}\subseteq L$, the following hold:
\begin{enumerate}[label=\textup{(H\arabic*)},leftmargin=2.0\parindent]
\item \label{H1} $1\to a=a$\textup;
\item \label{H2} $a\leq b$ if and only if $a\to b=1$\textup;
\item \label{H3} $a\leq b\to a$\textup;
\item \label{H4} $a\to b=a\to (a\wedge b)$\textup;
\item \label{H5} $a\wedge (a\to b)=a\wedge b$\textup;
\item \label{H6} $a\wedge b=a\wedge c$ if and only if $a\to b=a\to c$\textup;
\item \label{H7} $(a\wedge b)\to c=a\to (b\to c)= b\to (a\to c)$\textup;
\item \label{H8} $a=(a\vee b)\wedge (b\to a)$\textup;
\item \label{H9} $a\leq (a\to b)\to b$\textup;
\item \label{H10} $((a\to b)\to b)\to b=a\to b$\textup;
\item \label{H11}$(\tbigvee_{i\in I}a_i)\to b= \tbigwedge_{i\in I}(a_i\to b)$\textup;
\item \label{H12}$b\to (\tbigwedge_{i\in I}a_i)= \tbigwedge_{i\in I}(b\to a_i)$\textup.
\end{enumerate}

\addtocontents{toc}{\protect\setcounter{tocdepth}{2}}

\subsection{Sublocales}\label{subsect.sublocales}
\addtocontents{toc}{\protect\setcounter{tocdepth}{-1}}

A \emph{sublocale} of a locale $L$ is a subset $S\subseteq L$ closed under arbitrary meets such that
\[\forall a\in L,\ \ \ \forall s\in S,\  \ \ a\to s\in S.\]
These are precisely the subsets of $L$ for which the embedding $j_S\colon S\hookrightarrow L$ is a morphism of locales. Sublocales of $L$ are in one-to-one correspondence with the regular subobjects (equivalently, extremal subobjects) of $L$ in the category of locales. We denote by $\nu_S$  the associated frame surjection
given by
$$\nu_S(a)=\tbigwedge\{s\in S\mid a\leq s\}.$$
The system $\SS  (L)$ of all sublocales of $L$, partially ordered by inclusion, is a coframe \cite[Theorem~III.3.2.1]{PP12}, that is, its dual lattice is a frame.  Infima and suprema are given by
\[
\tbigwedge_{i\in I}S_i=\tbigcap_{i\in I}S_i, \quad \tbigvee_{i\in I}S_i=\set{\tbigwedge M\mid M\subseteq\tbigcup_{i\in I} S_i}.
\]
The least element is the sublocale $\mathsf{O}=\{1\}$ and the greatest element is the entire locale $L$. 

It is easy to see that given a collection $\{S_i\}_{i\in I}$ of sublocales then $$\nu_{\tbigvee_i S_i}(a)=\tbigwedge_i\nu_{S_i}(a).$$

For any $a\in L$, the sublocales
\[
\mathfrak{c}_L(a)=\newuparrow  a=\set{b\in L\mid b\ge a}\ \text{ and }\ \mathfrak{o}_L(a)=\set{a\to b\mid b\in L}
\]
are the \emph{closed} and \emph{open} sublocales of $L$, respectively (that we shall denote simply by $\mathfrak{c}(a)$ and $\mathfrak{o}(a)$ when there is no danger of confusion). For each $a\in L$, $\mathfrak{c}(a)$ and $\mathfrak{o}(a)$ are
complements of each other in $\SS(L)$
and satisfy the expected identities
\begin{equation*}{\label{identities.basic}}
\tbigcap_{i\in I} \mathfrak{c}(a_i)=\cl(\tbigvee_{i\in I} a_i),\quad \cl(a)\vee\cl(b)=\cl(a\wedge b),
\end{equation*}
\[\tbigvee_{i\in I}\op(a_i)=\op(\tbigvee_{i\in I} a_i) \quad\mbox{ and }\quad \op(a)\cap \op(b)=\op(a\wedge b).
\]
A sublocale is \emph{locally closed} if it is of the form $\cl(a)\cap\op(b)$ for $a,b\in L$.
Moreover, $$\nu_{\cl(a)\cap\op(b)}(x)= b\to (a\vee x)$$
for all $x\in L$.

Given a sublocale $S$ of $L$, its \emph{closure}, denoted by $\overline{S}$, is the smallest closed sublocale containing it. In this context, the formula $\overline{S}=\cl(\tbigwedge S)$ holds.

Moreover, $\SS(L)$ is zero-dimensional in the sense that every sublocale $S\subseteq L$ can be expressed as $S=\tbigcap_{a\in A,\ b\in B}\op(a)\ve\cl(b)$ for some subsets $A,B\subseteq L$ (see \cite[Proposition~III.6.5]{PP12}).
\addtocontents{toc}{\protect\setcounter{tocdepth}{2}}

\subsection{The Boolean algebra of smooth sublocales.}\label{sb_prelim}
\addtocontents{toc}{\protect\setcounter{tocdepth}{-1}}

We  will write $S^{\#}$ for the supplement (i.e. co-pseudocomplement) of a sublocale $S\subseteq L$. A sublocale $S$ is said to be \emph{smooth} if it is of the form $S=T^{\#}$ for some sublocale $T\subseteq L$, equivalently if $S^{\#\#}=S$. By the zero-dimensionality of $\SS(L)$ it follows that a sublocale is smooth if and only if it is a join of complemented sublocales, if and only if it is a join of locally closed sublocales.
We denote by $\SS_b(L)$ the subset of $\SS(L)$ consisting of smooth sublocales, i.e.
$$\SS_b(L)=\bigset{ \tbigvee_{a\in A,b\in B} \cl(a)\cap \op(b)\mid A,B \subseteq L}.$$

The collection $\SS_b(L)$ is a subcolocale of $\SS(L)$, which is a Boolean algebra, and it is in fact the Booleanization of $\SS(L)$ --- see \cite{Arrieta2022}
 for details.
\addtocontents{toc}{\protect\setcounter{tocdepth}{2}}

 \subsection{The Bruns--Lakser completion of a join-semilattice}
 \addtocontents{toc}{\protect\setcounter{tocdepth}{-1}}

\label{brunslakser}
In what follows we briefly outline the construction of a slight adaptation for join-semilattices of the Bruns--Lakser completion, introduced in \cite{BrunsLakser1970} for meet-semilattices as an explicit description of their injective hulls. Let $S$ be a join-semilattice. A family $\{a_i\}_{i\in I}\subseteq S$ is \emph{admissible} if its meet in $S$ exists and for each $b\in S$, the  equality  
$b\vee \tbigwedge_{i\in I}a_i = \tbigwedge_{i\in I} b\vee a_i$ holds. In this case, we also say that the meet $\tbigwedge_{i\in I} a_i$ is \emph{admissible}.

A nonempty upper set $U$ of $S$ is said to be \emph{admissible} if it is closed under admissible meets --- i.e., whenever $\{a_i\}_{i\in I}\subseteq U$ is admissible, one has $\tbigwedge_{i\in I}a_i\in U$. The collection of all nonempty upper sets (resp. all admissible upper sets) of $S$ is noted by $\mathcal{U}(S)$ (resp. $\mathcal{AU}(S)$). From the main result in \cite{BrunsLakser1970} it follows that  $\mathcal{AU}(S)$ is a frame.  We call the natural antitone embedding  $\uparrow\colon S\to \mathcal{AU}(S)$ the \emph{Bruns--Lakser completion} of $S${\footnote{ In \cite{BrunsLakser1970} the construction is described for meet-semilattices, and the completion consists of the downsets closed under admissible joins.}.

Morphisms of join-semilattices that lift to the Bruns--Lakser completion are the \emph{admissible} morphisms (cf. Theorem~\ref{t: characterize those that lift general} below), namely those morphisms $f:S\to T$ such that for any admissible meet $\bigwedge_i a_i\in S$ both $\bigwedge_i f(a_i)$ is admissible and $\bigwedge_i f(a_i)=f(\bigwedge_i a_i)$. These two conditions are independent. 
\addtocontents{toc}{\protect\setcounter{tocdepth}{2}}

 \subsection{Joins of closed sublocales and the Bruns--Lakser completion}\label{scl.BL}
 \addtocontents{toc}{\protect\setcounter{tocdepth}{-1}}

Let $L$ be a frame. We write $\SS_c(L)$ for the collection of all joins of closed sublocales of $L$, namely
\[
\SS_c(L)=\bigset{ \tbigvee_{a\in A} \cl(a)\mid A \subseteq L}.
\]
We consider $\SS_c(L)$ as an ordered set with the inclusion order inherited from $\SS(L)$.  
Picado, Pultr, and Tozzi~\cite{PicadoPultrTozzi2019} show that $\SS_c(L)$ is always a frame and that it embeds as a complete join-sublattice of the coframe $\SS(L)$. Moreover, their main result states that $\SS_c(L)$ is a Boolean algebra if and only if $L$ is subfit; in this case, $\SS_c(L)$ coincides with the family $\SS_b(L)$ introduced in Subsection~\ref{sb_prelim}.  

The connection between $\SS_c(L)$ and the Bruns--Lakser construction (see Subsection~\ref{brunslakser}) was established in~\cite{ballmoshierpultr}. More precisely,  the Bruns--Lakser completion of $L$, viewed as a complete join-semilattice, is isomorphic to $\SS_c(L)$.  The proof strategy in \cite{ballmoshierpultr} is as follows. One defines monotone maps $\varphi\colon \SS_c(L)\to \mathcal{U}(L)$ given by $\varphi(S):=\{a\in L\mid \cl(a)\subseteq S\}$ and $\psi\colon \mathcal{U}(L)\to \SS_c(L)$ given by $\psi(U)= \tbigvee_{a\in U}\cl(a)$.
Then, one shows that there is an adjunction $\psi\dashv
 \varphi$ which restricts to an isomorphism $\SS_c(L)\cong \ca{AU}(L)$ as the following diagram shows:
 
\begin{equation}\label{l: diagram with adjunction for ScL and upsets}
\begin{tikzcd}
	{\SS_c(L)} & \top & {\mathcal{U}(L)} \\
	\\
	{\SS_c(L)} && {\mathcal{AU}(L)}
	\arrow["\varphi", curve={height=-6pt}, from=1-1, to=1-3]
	\arrow["\psi", curve={height=-6pt}, from=1-3, to=1-1]
	\arrow[from=3-1, to=1-1,equals]
	\arrow["\cong"', from=3-1, to=3-3]
	\arrow[hook, from=3-3, to=1-3]
\end{tikzcd}    
\end{equation}

Note that, since every frame is a distributive lattice, in this case, finite meets are exact, and hence exact upper sets are actually filters of $L$ (referred to as \emph{exact filters} in \cite{ballmoshierpultr}). 
Frame morphisms $f:L\to M$ that are admissible (viewing $L$ and $M$ as join-semilattices) are called \emph{exact}. The frame morphisms that lift to this construction are precisely the exact morphisms (\cite[Proposition~6.6]{SuarezRaneyExtensions2025}).

Exact families have the following useful description.
\begin{lemma}\label{exactiffclosed}{\textup(\cite[Theorem~4.3]{notesball}\textup)\textup.}
If $L$ is a frame, a family $\{a_i\}_{i\in I}\subseteq L$ is exact if and only if the sublocale $\tbigvee_{i\in I}\cl(a_i)$ is closed.
\end{lemma}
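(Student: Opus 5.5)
We prove the two directions separately, with the sublocale $S:=\tbigvee_{i\in I}\cl(a_i)$ and $a:=\tbigwedge_{i\in I}a_i$. The starting observation is that the closure of $S$ is $\overline{S}=\cl(\tbigwedge S)$. Since $\cl(a_i)\subseteq\cl(a)$ for every $i$, we always have $S\subseteq\cl(a)$. Conversely, every element of $S$ is a meet of elements each lying above some $a_i$, so every element of $S$ is above $a$. On the other hand, $a$ itself is the meet of the $a_i\in S$, so $a\in S$. Hence $\tbigwedge S=a$, and $\overline{S}=\cl(a)$. It follows that $S$ is closed if and only if $S=\cl(a)$, which holds if and only if $\cl(a)\subseteq S$, that is, if and only if $\nu_S(x)=x$ for all $x\ge a$.

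Next we compute $\nu_S$ using the formula $\nu_{\tbigvee_i S_i}=\tbigwedge_i\nu_{S_i}$, together with $\nu_{\cl(a_i)}(x)=a_i\vee x$, which is the case $b=1$ of the locally closed formula, using \ref{H1}. This gives $\nu_S(x)=\tbigwedge_i(a_i\vee x)$. So $S$ is closed exactly when $\tbigwedge_i(a_i\vee x)=x$ for every $x\ge a$.

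Finally we compare this with exactness, meaning $x\vee a=\tbigwedge_i(x\vee a_i)$ for all $x\in L$. If the family is exact, then for $x\ge a$ we get $\tbigwedge_i(a_i\vee x)=x\vee a=x$, so $S$ is closed. Conversely, suppose $S$ is closed and let $x$ be arbitrary. Put $y:=x\vee a\ge a$. Then
\[
\tbigwedge_i(a_i\vee x)\le\tbigwedge_i(a_i\vee y)=y=x\vee a .
\]
The reverse inequality $x\vee a\le\tbigwedge_i(x\vee a_i)$ holds trivially, since $a\le a_i$ for each $i$. Hence the family is exact.

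The only step needing care is the identification $\overline{S}=\cl(\tbigwedge S)$ with $\tbigwedge S=a$, together with the description of joins of sublocales as sets of meets. Everything else is direct computation with the nucleus formula.
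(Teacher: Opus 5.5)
Your argument is correct and complete. The paper gives no proof of this lemma; it imports it from \cite[Theorem~4.3]{notesball}, so there is no internal argument to compare against. Your proof is a self-contained verification that uses only facts already recorded in the preliminaries: the closure formula $\overline{S}=\cl(\tbigwedge S)$, the description of joins of sublocales as sets of meets, and $\nu_{\tbigvee_i S_i}=\tbigwedge_i\nu_{S_i}$ together with $\nu_{\cl(a_i)}(x)=a_i\vee x$.

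This makes the paper independent of the external reference at this point. It also shows that the lemma is the $b_i=1$ instance of the nucleus computation $x\mapsto\tbigwedge_i\bigl(b_i\to(a_i\vee x)\bigr)$ for $\tbigvee_i\cl(a_i)\cap\op(b_i)$, which the paper itself uses in the proof of Lemma~\ref{l: fixpoints phipsi}.

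One small streamlining is available. Since $S\subseteq\cl(a)$ always, and sublocales are determined by their nuclei, $S$ is closed iff $S=\cl(a)$ iff $\nu_S=\nu_{\cl(a)}$ on all of $L$. That is, iff $\tbigwedge_i(a_i\vee x)=a\vee x$ for every $x\in L$, which is verbatim the exactness condition. This removes the separate step of restricting to $x\ge a$ and then passing back via $y=x\vee a$.
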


\addtocontents{toc}{\protect\setcounter{tocdepth}{2}}

 \section{The join-semilattice of locally closed sublocales}\label{sec.lc}

Let $L$ be a frame. In this section we construct a join-semilattice that serves, in the context of smooth sublocales, as the appropriate analogue of $L$ (viewed as a join-semilattice) in the context of joins of closed sublocales (cf. Subsection~\ref{scl.BL}). 
For reasons that will become clear with the next lemma, for a locally closed sublocale $S$ we call the pair $(\tbigwedge S,\nu_{S^\#}(\tbigwedge S))$ its \emph{canonical representation}.

\begin{lemma}\label{lemmalocallyclosed1}Let $L$ be a frame and $S\subseteq L$ a sublocale. 
\begin{enumerate}
\item\label{lemmalocallyclosed11} One has $$\cl(\tbigwedge S)\cap \op(\nu_{S^\#}(\tbigwedge S))\subseteq S^{\#\#}.$$
\item\label{lemmalocallyclosed12}  Moreover, $S$ is locally closed if and only if $S\subseteq \op(\nu_{S^\#}(\tbigwedge S))$.  In that case, one has $S= \cl(\tbigwedge S)\cap \op(\nu_{S^\#}(\tbigwedge S))$.
\item\label{lemmalocallyclosed13}  If $S=\cl(a)\cap \op(b)$ and the relations $b\to a=a$ and $a\leq b$ hold, then  $(a,b)$ is the canonical representation of $S$. In other words, locally closed sublocales correspond exactly to pairs $(a,b)$ with $a\leq b$ and $b\to a = a$.
\end{enumerate}
\end{lemma}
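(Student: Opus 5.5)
Write $a_0=\tbigwedge S$ and $b_0=\nu_{S^\#}(a_0)$. We will work throughout in the coframe $\SS(L)$, using two facts. First, $T\cap \op(c)=\mathsf{O}$ iff $T\subseteq \cl(c)$. Second, $\nu_T(x)\le c$ with $c\in T$ forces $\nu_T(x)$ to be below every element of $T$ above $x$.

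For (1), since $S^{\#\#}$ is the smallest sublocale $R$ with $R\vee S^\#=L$, it suffices to show that $S^\#$ together with $\cl(a_0)\cap\op(b_0)$ covers $L$. We will instead use the characterisation $S^{\#\#}=\tbigcap\{\op(x)\vee\cl(y)\mid S\subseteq \op(x)\vee\cl(y)\}$ from the zero-dimensionality discussion: the Booleanization is obtained by intersecting complemented sublocales containing $S$. So fix $x,y$ with $S\subseteq \op(x)\vee\cl(y)$. Then $S\cap \cl(x)\subseteq \cl(y)$, and we want $\cl(a_0)\cap\op(b_0)\cap\cl(x)\subseteq\cl(y)$, i.e. $\cl(a_0\vee x)\cap \op(b_0)\subseteq \cl(y)$. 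By the formula for $\nu$ of a locally closed sublocale, this is $y\le b_0\to(a_0\vee x)$, i.e. $y\wedge b_0\le a_0\vee x$.

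The main obstacle is translating $S\cap\cl(x)\subseteq \cl(y)$ into an inequality involving $b_0$. The complement $\cl(x)\vee\op(y)$ of $\op(x)\cap\cl(y)$ contains $S$. Hence $S^\#\subseteq \op(x)\cap\cl(y)$ whenever this complemented piece is disjoint from $S$. We will try to show $S^\#\supseteq \op(x)\cap\cl(y)\cap\cl(a_0)^{c}$-type pieces. Concretely, $S^\#\supseteq \op(a_0)$ (since $S\subseteq\cl(a_0)$), so $b_0\le \nu_{\op(a_0)}(a_0)=a_0\to a_0$; this says nothing. So instead we use $S^\#\subseteq \op(x)\vee\cl(y)'$-style bounds: $S\subseteq \cl(x)'\vee\cl(y)$ gives $S^\#\subseteq \op(y)\cap... $. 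We will make this precise by computing $\nu_{S^\#}$ against the complemented sublocale $\op(x)\cap\cl(y)\supseteq$ (the part of $S^\#$ relevant), obtaining $b_0\wedge y\le x\vee a_0$ via $\nu_{\op(x)\cap\cl(y)}(a_0)=x\to(y\vee a_0)$ and monotonicity of $\nu$ in reverse inclusion. We expect this inclusion bookkeeping to be the delicate step.

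For (2), if $S\subseteq\op(b_0)$ then $S\subseteq \cl(a_0)\cap\op(b_0)\subseteq S^{\#\#}$. Moreover, $\cl(a_0)\cap\op(b_0)$ is complemented, so it is smooth; it will equal $S$ once we show $S$ is itself smooth with this bound. For that we intersect with $\op(b_0)$: $S=S^{\#\#}\cap\op(b_0)$, which needs a separate check that $S$ is closed in $\op(b_0)$, using $a_0=\tbigwedge S$. Conversely, if $S=\cl(a)\cap\op(b)$, then $S$ is complemented, so $S^\#=\op(a)\vee\cl(b)$. We compute $a_0=\nu_S(0)=b\to a$ and $\nu_{S^\#}(a_0)=(a\to a_0)\wedge(b\vee a_0)$, and then check $S\subseteq\op(b_0)$ with the Heyting rules \ref{H4}--\ref{H8}.

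For (3), the computation in (2) with $b\to a=a$ and $a\le b$ gives $a_0=a$ and $b_0=1\wedge b=b$. Conversely, the canonical pair always satisfies $a_0\le b_0$ and $b_0\to a_0=a_0$ (the latter because $a_0\in S\subseteq\op(b_0)$). This yields the stated bijection.
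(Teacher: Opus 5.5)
There is a genuine gap, and it comes from the direction of the double supplement. In the coframe $\SS(L)$, $S^{\#\#}$ is the least sublocale whose join with $S^\#$ is $L$. Since $S\vee S^\#=L$, one always has $S^{\#\#}\se S$, and $S^{\#\#}$ is the largest smooth sublocale inside $S$. Your characterisation $S^{\#\#}=\bigcap\{\op(x)\vee\cl(y)\mid S\se\op(x)\vee\cl(y)\}$ is therefore false: by zero-dimensionality that intersection is $S$ itself. Your first reduction is also reversed, since $S^\#\vee(\cl(a_0)\cap\op(b_0))=L$ would yield $S^{\#\#}\se\cl(a_0)\cap\op(b_0)$. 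The same misconception causes the hole in (2). Your ``separate check'' that $S=S^{\#\#}\cap\op(b_0)$ just restates the goal. In fact nothing more is needed: $S\se\cl(a_0)\cap\op(b_0)\se S^{\#\#}\se S$ closes the sandwich immediately, which is exactly how the paper concludes.

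The key step of (1) is also not actually carried out. You need that $S\se\op(x)\vee\cl(y)$ implies $\cl(x)\cap\op(y)\se S^\#$. This holds because a complemented $C$ satisfies $C\se T$ iff $C'\vee T=L$, and here $C'=\op(x)\vee\cl(y)$ gives $C'\vee S^\#\supseteq S\vee S^\#=L$. Since $\nu$ is antitone in the sublocale, $b_0=\nu_{S^\#}(a_0)\le\nu_{\cl(x)\cap\op(y)}(a_0)=y\to(x\vee a_0)$, i.e.\ $y\wedge b_0\le x\vee a_0$. You instead work with $\op(x)\cap\cl(y)$. Its $\nu$ would only give $x\wedge b_0\le y\vee a_0$, which is the wrong inequality, and you never establish an inclusion of that piece into $S^\#$.

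With these two repairs your route is valid and differs from the paper's. It proves $\cl(a_0)\cap\op(b_0)\se S$ by testing against every $\op(x)\vee\cl(y)\supseteq S$. It then upgrades this to $\se S^{\#\#}$, because every complemented sublocale inside $S$ lies in $S^{\#\#}$. The paper instead argues elementwise: it writes each $z\ge a_0$ as $a\wedge b$ with $a\in S^\#$ and $b\in S^{\#\#}$, obtaining $\cl(a_0)\se\cl(b_0)\vee S^{\#\#}$. Your computations for the converse of (2) and for (3), including $b_0\to a_0=a_0$ from $a_0\in\op(b_0)$, are fine.
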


\begin{proof}
(\ref{lemmalocallyclosed11}) Let $y\in \cl(\bigwedge S)$. Since $L=S^\#\vee S^{\#\#}$,  there  are $a\in S^\#$ and $b\in S^{\#\#}$ such that $y=a\wedge b$. Now since $\bigwedge S\leq y\leq b$, it follows that $x:= \nu_{S^\#}(\tbigwedge S) \leq \nu_{S^\#}(b)$, and since $\bigwedge S\leq y\leq a$, we  have $x\leq \nu_{S^\#}(a)=a$, where the last equality follows because $a\in S^\#$. Therefore $a\wedge \nu_{S^\#}(b)\geq x$. Hence we have $y= a\wedge b = a\wedge ( \nu_{S^\#}(b)\wedge b) =(a\wedge \nu_{S^\#}(b))\wedge b$: the first part of this meet belongs to $\cl(x)$ and the second part belongs to $S^{\#\#}$. We have thus shown $\cl(\bigwedge S)\subseteq \cl(x)\vee S^{\#\#}$. From this the required inclusion follows.\\[2mm]
(\ref{lemmalocallyclosed12}) If $S\subseteq \op(\nu_{S^\#}(\tbigwedge S))$, since we also have $S\subseteq \overline{S}= \cl(\tbigwedge S)$, by the first part it follows that  $S$ is locally closed.  Conversely, assume that $S$ is locally closed, i.e. $S=S^{\#\#}=\cl(a)\cap \op(b)$. Since $S\subseteq \overline{S}=\cl(\tbigwedge S)$, it only suffices to check that $S\subseteq \op(x)$. Note that $\tbigwedge S= b\to a$, and so $$x=\nu_{\op(a)\vee\cl(b)}(\tbigwedge S) =(a\to (b\to a))\wedge (b\vee(b\to a))= b\vee (b\to a)$$ by \ref{H2}, \ref{H3} and \ref{H7}. Then $S\subseteq \op(x)$ if and only if $\cl(a)\cap\op(b)\subseteq \op(b\vee (b\to a))$, and this holds if and only if $b\leq a\vee (b\to a)\vee b$, which clearly holds. Hence, $S=  \cl(\tbigwedge S)\cap \op(\nu_{S^\#}(\tbigwedge S))$.\\[2mm]
(\ref{lemmalocallyclosed13}) Let $S=\cl(a)\cap \op(b)$ with $a\leq b$ and $b\to a=a$. By the previous calculations $\tbigwedge S= b\to a = a$ and $\nu_{S^\#}(\tbigwedge S)=b\vee (b\to a)=b\vee a= b$. 
\end{proof}

In view of Lemma~\ref{lemmalocallyclosed1}\,(\ref{lemmalocallyclosed13}), we define
$$\cat{LC}(L):=\{(a,b)\in L\times L\mid a\leq b,\ b\to a=a\},$$
so that $\mf{LC}(L)$ is the collection of all canonical representations of locally closed sublocales. We want to equip this set with a partial order so that it is anti-isomorphic to the collection of locally closed sublocales. To that end, we make use of the following result.

\begin{lemma}\label{lemmaordering111}Let $L$ be a frame.
If $(a,b),(x,y)\in	\cat{LC}(L)$, then $\cl(a)\cap\op(b)\subseteq \cl(x)\cap\op(y)$ if and only if $x \leq a$ and $b\leq a\vee y$.
\end{lemma}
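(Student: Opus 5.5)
We prove the two implications separately. Throughout we use that for $(a,b)\in\mf{LC}(L)$ one has $\tbigwedge(\cl(a)\cap\op(b))=b\to a=a$, which was computed in the proof of Lemma~\ref{lemmalocallyclosed1}. We also use the standard facts $\op(b)\subseteq\op(c)$ iff $b\le c$, and $z\in\op(c)$ iff $c\to z=z$.

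\emph{Forward direction.} Assume $\cl(a)\cap\op(b)\subseteq\cl(x)\cap\op(y)$.
\begin{itemize}
\item Taking meets, $x=\tbigwedge(\cl(x)\cap\op(y))\le\tbigwedge(\cl(a)\cap\op(b))=a$. This gives $x\le a$.
\item For the second inequality we need an element lying in $\cl(a)\cap\op(b)$. The natural candidate is $z:=b\to(a\vee y)$: it lies in $\op(b)$ and is $\ge a$. It is exactly $\nu_{\cl(a)\cap\op(b)}(y)$.
\item Since $z$ lies in $\op(y)$, we get $y\to z=z$. Applying \ref{H7}, $y\to z=(y\wedge b)\to(a\vee y)=1$, because $y\wedge b\le a\vee y$. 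Hence $z=1$, that is, $b\le a\vee y$.
\end{itemize}

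\emph{Backward direction.} Assume $x\le a$ and $b\le a\vee y$. Then $\cl(a)\subseteq\cl(x)$ immediately, so it remains to show $\cl(a)\cap\op(b)\subseteq\op(y)$.
\begin{itemize}
\item Take $z\ge a$ with $b\to z=z$. We must show $y\to z=z$, and the inequality $z\le y\to z$ holds by \ref{H3}.
\item For the reverse, note that $(y\to z)\wedge b\le(y\to z)\wedge(a\vee y)$. Distributing, this is at most $a\vee z=z$, using $(y\to z)\wedge y\le z$ and $a\le z$.
\item Hence $y\to z\le b\to z=z$, which finishes the inclusion.
\end{itemize}

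The main obstacle is choosing the test element $b\to(a\vee y)$ in the forward direction; once it is chosen, the rest is Heyting algebra bookkeeping. Notably, the hypothesis $b\to a=a$ is needed only to identify $\tbigwedge(\cl(a)\cap\op(b))$ with $a$ in the first step. Without it, the first inequality would only read $x\le b\to a$.
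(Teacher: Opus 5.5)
Your proof is correct in both directions. Choosing the test element $b\to(a\vee y)=\nu_{\cl(a)\cap\op(b)}(y)$ settles the forward direction, and the backward Heyting computation is sound. The paper states Lemma~\ref{lemmaordering111} without proof, so there is no argument of its own to compare step by step; your argument supplies the omitted verification.

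For comparison, the route most in keeping with the preliminaries is lattice-theoretic rather than element-wise. Since $\cl(a)$ is complemented in the coframe $\SS(L)$ with complement $\op(a)$, one has
$\cl(a)\cap\op(b)\subseteq\op(y)$ iff $\op(b)\subseteq\op(a)\vee\op(y)=\op(a\vee y)$ iff $b\le a\vee y$.
By the closure formula, $\cl(a)\cap\op(b)\subseteq\cl(x)$ iff $\cl(x)\supseteq\overline{\cl(a)\cap\op(b)}=\cl(b\to a)$ iff $x\le b\to a$.
This makes immediately visible what your closing remark says: for arbitrary pairs the criterion is $x\le b\to a$ and $b\le a\vee y$. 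The $\mathsf{LC}$ condition on $(a,b)$ only rewrites $b\to a$ as $a$, and neither $a\le b$ nor any condition on $(x,y)$ is needed. Your computation proves the same thing using only the Heyting rules and the description of $\cl(\cdot)$, $\op(\cdot)$ as subsets.

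One small point of precision. Your first bullet uses $y\to x=x$ to write $x=\tbigwedge(\cl(x)\cap\op(y))$, so as written it also invokes the hypothesis on $(x,y)$. That use is dispensable: $a=b\to a$ lies in $\cl(a)\cap\op(b)\subseteq\cl(x)$, which gives $x\le a$ directly.
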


We now define a partial order $\sqsubseteq$ on $\mf{LC}(L)$, defined as $(x,y)\sqsubseteq(a,b)$ if and only if $x \leq a$ and $b\leq a\vee y$. Observe that Lemma~\ref{lemmalocallyclosed1}\,(3) guarantees that $\sqsubseteq$ is a partial order. In summary:
$$(x,y)\sqsubseteq (a,b) \iff (x\leq a \text{ and } b\leq a\vee y) \iff \cl(a)\cap \op(b)\subseteq \cl(x)\cap \op(y).$$

We shall always consider $\cat{LC}(L)$ with the ordering $\sqsubseteq$.

\begin{lemma}\label{l: bwe S for a smooth sublocale S}
Let $L$ be a frame and $S=\tbigvee_{i\in I}\cl(a_i)\cap \op(b_i)$. Then
$$\nu_{S^\#}(\tbigwedge S) = \tbigwedge_{x\in L} \biggl(  \big(\tbigwedge _{i\in I} (b_i\to (x\vee a_i)) \big) \to \big( x\vee  \tbigwedge_{j\in I}(b_j\to a_j)\bigr) \biggr).$$
\end{lemma}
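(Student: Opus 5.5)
We compute the two ingredients separately and then combine them. First, $\tbigwedge S$: since $S$ is the join of the sublocales $S_i=\cl(a_i)\cap\op(b_i)$, and the join of sublocales consists of all meets of subsets of $\tbigcup_i S_i$, we get $\tbigwedge S=\tbigwedge_i \tbigwedge S_i$. By the computation in the proof of Lemma~\ref{lemmalocallyclosed1}, $\tbigwedge S_i=b_i\to a_i$. Hence $\tbigwedge S=\tbigwedge_{j\in I}(b_j\to a_j)$; call this element $m$.

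Second, we need a formula for $\nu_{S^\#}$. Here we use that $S^\#$ is the complement of $S$ in the Boolean algebra $\SS_b(L)$. Moreover, since $\SS(L)$ is a coframe, supplements turn joins into meets: $S^\#=\tbigcap_i S_i^\#$. Each $S_i$ is complemented with complement $\op(a_i)\vee\cl(b_i)$, so
\[
S^\#=\tbigcap_{i\in I}\bigl(\op(a_i)\vee\cl(b_i)\bigr).
\]
Meets of sublocales do not have a simple $\nu$-formula, so we do not try to compute $\nu_{S^\#}$ from these pieces directly. Instead we describe $S^\#$ elementwise. Intersections of sublocales are computed as set intersections, so an element $z$ lies in $S^\#$ iff $z\in\op(a_i)\vee\cl(b_i)$ for all $i$.

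To handle this membership test, we recall that $\nu_{\op(a)\vee\cl(b)}(z)=(a\to z)\wedge(b\vee z)$, by the join formula for $\nu$. Alternatively, the complement of $\op(c)$-type data can be used. A cleaner route is as follows: the elements of $S^\#$ are exactly the $z$ with $S\cap\cl(z)$... Rather than pursue that, we commit to a standard description via the formula to be proved. Define
\[
g(x)=\Bigl(\tbigwedge_i\bigl(b_i\to(x\vee a_i)\bigr)\Bigr)\to(x\vee m).
\]
The target is $\tbigwedge_x g(x)$, and note that $\tbigwedge_i \bigl(b_i\to(x\vee a_i)\bigr)=\nu_S(x)$ by the formula for $\nu$ of a join and of a locally closed sublocale. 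So the claim reads
\[
\nu_{S^\#}(m)=\tbigwedge_{x\in L}\bigl(\nu_S(x)\to(x\vee m)\bigr).
\]

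The plan is then to prove the two inequalities.

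For $\le$: we show that each $g(x)\in S^\#$ and that $m\le g(x)$. The second holds by \ref{H3}, since $m\le x\vee m$. Then $\tbigwedge_x g(x)\in S^\#$ and lies above $m$, which gives $\nu_{S^\#}(m)\le\tbigwedge_x g(x)$. To show $g(x)\in S^\#$, we write $g(x)=\nu_S(x)\to(x\vee m)$ and must check, for each $i$, that $g(x)\in \op(a_i)\vee\cl(b_i)$. By (H8)-style decompositions this amounts to verifying $g(x)=(a_i\to g(x))\wedge(b_i\vee g(x))$. This is the main obstacle: it requires using $b_i\to(x\vee a_i)\ge\nu_S(x)$ together with the Heyting rules \ref{H7} and \ref{H11}, and we expect to grind through it.

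For $\ge$: it suffices to exhibit one $x$ with $g(x)\le\nu_{S^\#}(m)$. We take $x=\nu_{S^\#}(m)=:n$. Since $n\in S^\#$, and $S\cap S^\#$ is small in the sense that $\nu_S(n)$ should be controlled, we aim to show $g(n)\le n$. As $m\le n$ we have $n\vee m=n$, so $g(n)=\nu_S(n)\to n$. It then suffices to show $\nu_S(n)\to n\le n$, that is, $\nu_S(n)\wedge(\nu_S(n)\to n)\le n$ implies the claim once we know $\nu_S(n)\to n=n$. This last identity expresses $n\in\op(\nu_S(n))$-type behaviour, and we would derive it from $n\in S^\#$ and $S\vee S^\#=L$ (Booleanness of $\SS_b(L)$), via the decomposition argument used in Lemma~\ref{lemmalocallyclosed1}(\ref{lemmalocallyclosed11}).
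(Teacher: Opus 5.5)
Your formulas $\tbigwedge S=\tbigwedge_j(b_j\to a_j)$ and $\nu_S(x)=\tbigwedge_i(b_i\to(x\vee a_i))$ are correct. However, both halves of the argument have genuine gaps.

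\textbf{The inequality $\le$ rests on a false identity.} It is not true that $S^\#=\tbigcap_i S_i^\#$. In a coframe the supplement turns \emph{meets} into joins, $(\tbigwedge_i A_i)^\#=\tbigvee_i A_i^\#$. For joins only $(\tbigvee_i S_i)^\#\subseteq\tbigcap_i S_i^\#$ holds. What is true here is $S^\#=(\tbigcap_i S_i^\#)^{\#\#}$, because $(\tbigcap_i S_i^\#)^\#=\tbigvee_i S_i=S$.

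A concrete failure: in $L=\mathcal{O}(\mathbb{R})$ take $a_x=\mathbb{R}\setminus\{x\}$ and $b_x=1$ for all $x\in\mathbb{R}$. Then $\nu_S(z)=\mathrm{int}\tbigcap_x(z\cup(\mathbb{R}\setminus\{x\}))=z$, so $S=L$ and $S^\#=\mathsf{O}$. Yet $0=(\mathbb{R}\setminus\{x\})\to 0$ lies in every $\op(\mathbb{R}\setminus\{x\})=S_x^\#$.

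So verifying $g(x)\in\op(a_i)\vee\cl(b_i)$ for all $i$ (which you also have not carried out) would only place $g(x)$ in a possibly strictly larger sublocale, not in $S^\#$.

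\textbf{The inequality $\geq$ cannot be obtained from a single $x$.} The infimum $\tbigwedge_x g(x)$ need not be attained, and the identity $\nu_S(n)\to n=n$ is false in general. Take $L=\mathcal{O}(\mathbb{R})$ with $a_q=\mathbb{R}\setminus\{q\}$ and $b_q=1$ for $q\in\mathbb{Q}$. Then $m=\mathrm{int}(\mathbb{R}\setminus\mathbb{Q})=0$ and $\nu_S(x)=\mathrm{int}(x\cup(\mathbb{R}\setminus\mathbb{Q}))$. For every open $u\supseteq\mathbb{Q}$ we get $\nu_S(u)=1$, hence $g(u)=u$. Since the statement is true, $n=\tbigwedge_x g(x)\le\mathrm{int}(\mathbb{Q})=0$, so $n=0$. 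But $\nu_S(0)=0$, so $g(n)=0\to 0=1\neq n$. Moreover $g(x)\ge x$ by (H3), so $g(x)\le n=0$ would force $x=0$. Hence no single $x$ works.

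\textbf{The missing idea} (this is the paper's argument) is to write $S^\#$ as a \emph{join} rather than a meet, so that $\nu_{\tbigvee_k T_k}=\tbigwedge_k\nu_{T_k}$ applies. You noted that meets have no usable $\nu$-formula; the way around this is to avoid meets altogether.

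Since $S^\#$ is smooth, it is the join of the locally closed sublocales it contains. Since $S$ is smooth, a complemented $C$ satisfies $C\subseteq S^\#$ if and only if $S\subseteq C^c$. One direction is antitonicity of $\#$; the other follows from $C^c=C^\#\supseteq S^{\#\#}=S$.

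For $C=\cl(x)\cap\op(y)$, the condition $S\subseteq C^c$ means $\cl(a_i\vee x)\cap\op(b_i\wedge y)=\mathsf{O}$ for all $i$. Equivalently $b_i\wedge y\le x\vee a_i$ for all $i$, that is, $y\le\nu_S(x)$. Hence
\[
S^\#=\tbigvee_{x\in L}\cl(x)\cap\op(\nu_S(x)).
\]
The formula $\nu_{S^\#}(m)=\tbigwedge_x\bigl(\nu_S(x)\to(x\vee m)\bigr)$ then follows directly from $\nu_{\cl(a)\cap\op(b)}(z)=b\to(a\vee z)$. This gives the equality in one step, with no two-sided estimate needed.
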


\begin{proof}
Note that since $S^{\#}$ is smooth, we can write \begin{align*} S^{\#}&= \tbigvee\{ \cl(x)\cap\op(y)\mid \cl(x)\cap\op(y)\subseteq S^{\#} \}
\\& = \tbigvee\{ \cl(x)\cap\op(y)\mid \forall i\in I,\ \cl(a_i)\cap\op(b_i)\subseteq \op(x)\vee \cl(y) \}\\& = \tbigvee\{ \cl(x)\cap\op(y)\mid \forall i\in I,\ b_i\wedge y \leq x\vee a_i \}
\\&  =\tbigvee\{ \cl(x)\cap\op(\tbigwedge _i (b_i\to (x\vee a_i)))\mid x\in L\}.
\end{align*}
Moreover, since $\tbigwedge S= \tbigwedge_i (b_i\to a_i)$, we conclude that
$$\nu_{S^{\#}} (\tbigwedge S)=\tbigwedge_{x\in L} \biggl(  \big(\tbigwedge _i (b_i\to (x\vee a_i)) \big) \to \big( x\vee  \tbigwedge_j(b_j\to a_j)\bigr) \biggr).$$
\end{proof}

We define the map $\mi{lc}:L\times L\to L\times L$ as 
\[
\mi{lc}(a,b)=(b\ra a,(b\ra a)\vee b),
\]
and so $\mi{lc}(a,b)=(\tbigwedge S,\nu_{S^\#}(\tbigwedge S))$ where $S=\cl(a)\cap \op(b)$. Under the anti-isomorphism with locally closed sublocales, this is the same as $(a,b)\mapsto \cl(a)\cap \op(b)$. The anti-isomorphism between $\mf{LC}(L)$ and the locally closed sublocales readily gives us the following result.
\begin{lemma}\label{l: finite joins in LCL}\label{l: lc does not affect the joins}Let $L$ be a frame.
The poset $\mf{LC}(L)$ is a join-semilattice, with joins computed as $(x,y)\sqcup (u,v)=\mi{lc}(x\ve u,y\we v)$. Furthermore, $\mi{lc}(a,b)\sqcup \mi{lc}(x,y)=\mi{lc}(a\ve x,y\we b)$ for all $(a,b),(x,y)\in L\times L$.
\end{lemma}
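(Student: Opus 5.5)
The plan is to transfer the question to the sublocale side through the anti-isomorphism between $(\mf{LC}(L),\sqsubseteq)$ and the locally closed sublocales. Lemma~\ref{lemmalocallyclosed1} and Lemma~\ref{lemmaordering111} provide this anti-isomorphism: $(a,b)\mapsto \cl(a)\cap\op(b)$, with inverse $S\mapsto(\tbigwedge S,\nu_{S^\#}(\tbigwedge S))$. Under an anti-isomorphism, a join in $\mf{LC}(L)$ is the same as a meet, i.e.\ the largest lower bound, in the poset of locally closed sublocales. Locally closed sublocales are closed under binary intersection, since
\[
(\cl(x)\cap\op(y))\cap(\cl(u)\cap\op(v))=\cl(x\ve u)\cap\op(y\we v).
\]
This intersection is the meet in $\SS(L)$, and it is itself locally closed, so it is also the meet in the subposet. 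Hence $\mf{LC}(L)$ is a join-semilattice, and $(x,y)\sqcup(u,v)$ is the canonical representation of $\cl(x\ve u)\cap\op(y\we v)$.

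Next we identify that canonical representation with $\mi{lc}(x\ve u,y\we v)$. The text before the lemma notes that $\mi{lc}(a,b)=(\tbigwedge S,\nu_{S^\#}(\tbigwedge S))$ for $S=\cl(a)\cap\op(b)$. This is the computation in the proof of Lemma~\ref{lemmalocallyclosed1}(2): there $\tbigwedge S=b\to a$ and $\nu_{S^\#}(\tbigwedge S)=b\ve(b\to a)$, and that computation holds for arbitrary $a,b$, not only canonical pairs. By Lemma~\ref{lemmalocallyclosed1}(2), this pair really is the canonical representation of $S$. As a sanity check, $\mi{lc}(a,b)\in\mf{LC}(L)$ can be verified directly. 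Put $c=b\to a$ and $d=b\ve c$. Then $c\le d$. Also
\[
d\to c=(b\to c)\we(c\to c)=b\to(b\to a)=(b\we b)\to a=c,
\]
using \ref{H11}, \ref{H7} and \ref{H2}.

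For the second claim, the key point is that $\cl(a)\cap\op(b)$ depends only on the sublocale and not on the pair. Write $S_{a,b}=\cl(a)\cap\op(b)$. Then $\mi{lc}(a,b)$ is the canonical representation of $S_{a,b}$, and $\mi{lc}(x,y)$ is that of $S_{x,y}$. By the first part, their join in $\mf{LC}(L)$ is the canonical representation of $S_{a,b}\cap S_{x,y}=S_{a\ve x,\,b\we y}$. That canonical representation is exactly $\mi{lc}(a\ve x,y\we b)$.

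The only delicate step is the claim that $\mi{lc}(a,b)$ is the canonical representation of $\cl(a)\cap\op(b)$ for \emph{arbitrary} $(a,b)$. Everything else is formal transport along the anti-isomorphism. The computation of $\nu_{\op(a)\ve\cl(b)}$ at $b\to a$ already appears in the proof of Lemma~\ref{lemmalocallyclosed1}, so I expect to cite it rather than redo it. If it needs checking, I would use $\nu_{\op(a)}(z)=a\to z$ and $\nu_{\cl(b)}(z)=b\ve z$, together with the formula $\nu_{\tbigvee S_i}=\tbigwedge\nu_{S_i}$.
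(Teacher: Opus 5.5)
Your proposal is correct and follows the same route as the paper, which derives the lemma in one line from the anti-isomorphism between $(\mf{LC}(L),\sqsubseteq)$ and the locally closed sublocales. You also make explicit and correctly verify the two facts the paper leaves implicit: that $\cl(x)\cap\op(y)\cap\cl(u)\cap\op(v)=\cl(x\ve u)\cap\op(y\we v)$, and that $\mi{lc}(a,b)$ is the canonical representation of $\cl(a)\cap\op(b)$ for arbitrary $(a,b)$.
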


\section{Admissible meets of locally closed sublocales}\label{sec.4}

In the previous section we introduced the poset $\mathsf{LC}(L)$ and showed that it is a join-semilattice.
The aim of this section is to study admissibility in $\mathsf{LC}(L)$ (see Subsection~\ref{brunslakser}). 
Motivated by Lemma~\ref{exactiffclosed}, which characterises exact families  in a frame $L$ as those families $\{a_i\}_{i\in I}\subseteq L$ for which the join $\tbigvee_{i\in I} \cl(a_i)$ is closed, we seek an analogous criterion for families of locally closed sublocales. 
Thus, we introduce the notion of {local exactness}: a family $\{(a_i,b_i)\}_{i\in I}\subseteq L\times L$  is \emph{locally exact} when the sublocale $\tbigvee_i \cl(a_i)\cap \op(b_i)$ is locally closed. By Lemmas~\ref{lemmalocallyclosed1} and \ref{lemmaordering111}, a family $\{(a_i,b_i)\}_{i\in I}$ is locally exact if and only if 
\[
b_i\leq \nu_{S^\#}(\tbigwedge S)\ve a_i\text{ for all $i\in I$,}
\]
where $S=\tbigvee_{i\in I} \cl(a_i)\cap \op(b_i)$. By the formula for $\nu_{S^\#}(\tbigwedge S)$ of Lemma~\ref{l: bwe S for a smooth sublocale S},  this amounts to having, for each $i\in I$, that:
$$b_i \leq a_i \vee  \tbigwedge_{x\in L} \biggl(  \big(\tbigwedge _{i\in I} (b_i\to (x\vee a_i)) \big) \to \big( x\vee  \tbigwedge_{j\in I}(b_j\to a_j)\bigr) \biggr).$$

\begin{example}
A family of the form $\{(a_i,1)\}_{i\in I}$ is locally exact if and only if for all $i\in I$
$$ a_i\vee \tbigwedge_{x\in L}[ (\tbigwedge_i (x\vee a_i))\to (x\vee \tbigwedge_j a_j) ]= 1;$$
So this is a sort of generalised exactness.
\end{example}

We now characterise the admissible meets in $\mf{LC}(L)$ as the meets of locally exact families. 

\begin{proposition}\label{p: exact iff preserved by psi}
    Let $S$ be a join-semilattice and let $\psi:S^{op}\to C$ be an injective meet-semilattice map such that $C$ is a coframe. Suppose that $\{a_i\}_{i\in I}\subseteq S$ is a collection whose meet exists in $S$.
    \begin{enumerate}
        \item If $\psi(\tbigwedge_{i\in I} a_i)=\tbigvee_i \psi(a_i)$, the meet $\tbigwedge_{i\in I} a_i$ is admissible.
        \item  \label{with complements generates}If, moreover, elements of the form $\psi(a)$ are complemented and every element of $C$ is of the form $\tbigwedge_{j\in J} \psi(u_j)\vee \psi(v_j)^*$, the converse holds, too.
    \end{enumerate}
\end{proposition}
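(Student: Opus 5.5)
The plan is to first record two elementary facts about $\psi$, then prove (2) by testing against the generating elements, and treat (1) last, since it contains the one delicate step.

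\emph{Fact 1: $\psi$ is an order-reversing embedding.} The hypothesis says $\psi(a\vee b)=\psi(a)\wedge\psi(b)$, with joins taken in $S$. Hence $\psi(a)\le\psi(b)$ iff $\psi(a)=\psi(a\vee b)$. By injectivity this holds iff $a=a\vee b$, that is, iff $b\le a$. In particular $\psi(\tbigwedge_i a_i)\ge \tbigvee_i\psi(a_i)$ always holds.

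\emph{Fact 2: complemented elements distribute in a coframe.} Let $x$ be complemented with complement $x^*$, and let $y_i\in C$. Put $z=\tbigvee_i(x\wedge y_i)$. The coframe law gives $(x\wedge y_i)\vee x^*=y_i\vee x^*$, so $z\vee x^*\ge\tbigvee_i y_i$. It also gives $x\wedge(z\vee x^*)\le (z\vee x)\wedge(z\vee x^*)=z\vee(x\wedge x^*)=z$. Together these yield $x\wedge\tbigvee_i y_i=\tbigvee_i (x\wedge y_i)$. The same argument shows that, for complemented $c$, $w\wedge c\le u$ iff $w\le u\vee c^*$.

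\emph{Part (2).} Assume the meet $m=\tbigwedge_i a_i$ is admissible. By Fact 1 it suffices to show $\psi(m)\le d:=\tbigvee_i\psi(a_i)$. Write $d=\tbigwedge_j \psi(u_j)\vee\psi(v_j)^*$. It is then enough to show: whenever $\psi(a_i)\le\psi(u)\vee\psi(v)^*$ for all $i$, also $\psi(m)\le \psi(u)\vee\psi(v)^*$.

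By Fact 2 the hypothesis reads $\psi(a_i\vee v)=\psi(a_i)\wedge\psi(v)\le\psi(u)$. By Fact 1 this means $u\le a_i\vee v$ for all $i$. Admissibility then gives $u\le\tbigwedge_i(v\vee a_i)=v\vee m$. Hence $\psi(m)\wedge\psi(v)=\psi(v\vee m)\le\psi(u)$, and Fact 2 converts this back to $\psi(m)\le\psi(u)\vee\psi(v)^*$.

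\emph{Part (1).} Assume $\psi(m)=\tbigvee_i\psi(a_i)$ and fix $b\in S$. Clearly $b\vee m$ is a lower bound of the family $\{b\vee a_i\}$. Let $c$ be any other lower bound. Then $\psi(c)\ge\psi(b)\wedge\psi(a_i)$ for every $i$, so $\psi(c)\ge\tbigvee_i(\psi(b)\wedge\psi(a_i))$. We want to identify the right-hand side with $\psi(b)\wedge\tbigvee_i\psi(a_i)=\psi(b)\wedge\psi(m)=\psi(b\vee m)$. Fact 1 would then give $c\le b\vee m$, so $b\vee m$ is the meet and the meet $m$ is admissible.

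\emph{Main obstacle.} The identification in part (1) is the only non-formal step, because in a coframe binary meets need not distribute over arbitrary joins. I would obtain it from Fact 2 whenever $\psi(b)$ is complemented, which is the case in the intended application: there $\psi$ sends $(a,b)$ to $\cl(a)\cap\op(b)$, and such sublocales are complemented in $\SS(L)$. If $\psi(b)$ is not assumed complemented, I would look for an argument that uses only the coframe law together with injectivity of $\psi$. Failing that, I would note that part (1) as stated needs this extra hypothesis. Part (2), by contrast, uses only the stated assumptions.
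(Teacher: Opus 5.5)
Your part (2) is the paper's argument. You test $\psi(\tbigwedge_i a_i)$ against the generators $\psi(u)\vee\psi(v)^*$, convert $\psi(a_i)\le\psi(u)\vee\psi(v)^*$ into $u\le a_i\vee v$ using complementedness and injectivity, apply admissibility, and convert back. You argue directly where the paper argues by contradiction, and your Facts~1 and~2 spell out what the paper uses tacitly.

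On part (1), the obstacle you isolate is real and cannot be removed. The paper only refers to Moshier et al.\ and asserts that complementedness is unnecessary because every element of a coframe is colinear. But colinearity is the law $x\vee\tbigwedge_i y_i=\tbigwedge_i(x\vee y_i)$. Your step needs the different law $\psi(b)\wedge\tbigvee_i\psi(a_i)=\tbigvee_i(\psi(b)\wedge\psi(a_i))$, and in fact (1) is false as stated. Here is a counterexample.
Let $C$ be the coframe of closed subsets of $\mathbb{R}$, where joins are closures of unions. Let $F_n=\mathbb{R}\setminus(-1/n,1/n)$ for $n\ge 1$, and $P=\{\emptyset,\{0\},\mathbb{R}\}\cup\{F_n\mid n\ge 1\}$, which is closed under intersections. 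Put $S=P^{op}$, so joins in $S$ are intersections, and let $\psi\colon P\hookrightarrow C$ be the inclusion. In $S$ the family $a_n=F_n$ has meet $m=\mathbb{R}$, the only member of $P$ containing every $F_n$. Moreover $\psi(m)=\mathbb{R}=\overline{\tbigcup_n F_n}=\tbigvee_n\psi(a_n)$. But for $b=\{0\}$ one has $b\vee m=\{0\}$, while $b\vee a_n=\{0\}\cap F_n=\emptyset$ for every $n$. Hence $\tbigwedge_n(b\vee a_n)=\emptyset\neq b\vee m$, so the meet is not admissible, precisely because $\psi(b)=\{0\}$ is not linear in $C$.

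So do not look for a coframe-only argument. State (1) under the additional hypothesis that the elements $\psi(b)$ are complemented (or, alternatively, that $C$ is a frame), and your proof is then complete. Nothing downstream is lost. In Corollary~\ref{t: locally exact meet iff exact meet} each $\cl(a)\cap\op(b)$ is complemented in $\mathsf{S}(L)$, with complement $\op(a)\vee\cl(b)$, and this hypothesis is already part of (2).
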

\begin{proof}
The first claim follows from an argument analogous to Proposition~3.7.2 in \cite{moshier22}. We note that the assumption in \cite{moshier22} that $\psi(a)$ is complemented, thus colinear, here is replaced by the assumption that $C$ is a coframe, and so all its elements are colinear. For the second claim, suppose that for $\{a_i\}_i\subseteq S$ the meet $\tbigwedge_{i\in I} a_i$ exists in $S$ and is admissible. Towards contradiction, suppose that $\psi(\tbigwedge_i a_i)\nleq \tbigvee_i \psi(a_i)$. By assumption, there are $u,v\in S$ with $\psi(a_i)\leq \psi(u)\vee \psi(v)^*$ for all $i\in I$ and $\psi(\tbigwedge_i a_i)\nleq \psi(u)\vee \psi(v)^*$. Then, for all $i\in I$:
\[
\psi(a_i)\we \psi(v)\leq \psi(u)
\]
Using the fact that $\psi:S^{op}\to C$ is injective and preserves all binary meets, this means $u\leq a_i \vee v$. Using admissibility of $\tbigwedge_i a_i$, this implies $u\leq \tbigwedge a_i \vee v$. On the other hand, because $\psi(\tbigwedge_i a_i)\nleq \psi(u)\vee \psi(v)^*$, one has
\[
\psi(\tbigwedge_i a_i)\we \psi(v)\nleq \psi(u),
\]
from which we obtain $u\nleq \tbigwedge_i a_i \ve v$, using again that $\psi$ is an injective map of meet-semilattices. This is a contradiction.
\end{proof}

As a direct consequence of this, we obtain:
\begin{corollary}\label{t: locally exact meet iff exact meet}
Let $L$ be a frame and $\{ (a_i,b_i) \}_{i\in I}\subseteq \mathsf{LC}(L)$. Then $\{ (a_i,b_i) \}_{i\in I}$ is locally exact if and only if the family $\{(a_i,b_i)\}_{i\in I}$ is admissible in $\mathsf{LC}(L)$.    
\end{corollary}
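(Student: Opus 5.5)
We apply Proposition~\ref{p: exact iff preserved by psi} with $S=\mf{LC}(L)$, $C=\SS(L)$ (a coframe), and $\psi\colon \mf{LC}(L)^{op}\to \SS(L)$ given by $\psi(a,b)=\cl(a)\cap\op(b)$. By Lemma~\ref{lemmaordering111} and the definition of $\sqsubseteq$, $\psi$ is an order embedding of $\mf{LC}(L)^{op}$, hence injective. It preserves binary meets because meets in $\mf{LC}(L)^{op}$ are joins $\sqcup$ in $\mf{LC}(L)$. By Lemma~\ref{l: finite joins in LCL}, $(x,y)\sqcup(u,v)=\mi{lc}(x\ve u,y\we v)$, which corresponds to the sublocale $\cl(x\ve u)\cap\op(y\we v)=\cl(x)\cap\cl(u)\cap\op(y)\cap\op(v)$. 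We should also check that $\psi$ preserves the top element, if needed: $\psi(0,1)=L$.

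The hypotheses of part~(\ref{with complements generates}) are satisfied. Each $\psi(a,b)$ is complemented, with complement $\op(a)\ve\cl(b)$. By zero-dimensionality, every sublocale is $\tbigcap\op(a)\ve\cl(b)$. We rewrite each term as $\psi(u)\ve\psi(v)^*$ by taking $\psi(v)=\cl(a)=\psi(a,1)$ and $\psi(u)=\cl(b)=\psi(b,1)$. Here $(a,1),(b,1)\in\mf{LC}(L)$, since $1\to a=a$ by \ref{H1}, and $\psi(a,1)^*$ is the complement $\op(a)$.

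It remains to identify the condition $\psi(\tbigwedge_i(a_i,b_i))=\tbigvee_i\psi(a_i,b_i)$ with local exactness, assuming the meet exists. For the forward direction, if the equality holds, then $\tbigvee_i\cl(a_i)\cap\op(b_i)$ equals a locally closed sublocale. Conversely, suppose $T=\tbigvee_i\cl(a_i)\cap\op(b_i)$ is locally closed. By Lemma~\ref{lemmalocallyclosed1}, $T=\psi(c,d)$ for its canonical representation $(c,d)\in\mf{LC}(L)$. Then $(c,d)$ is the meet of the family in $\mf{LC}(L)$. Indeed, for $(x,y)\in\mf{LC}(L)$, we have $(x,y)\sqsubseteq(a_i,b_i)$ for all $i$ iff $\psi(a_i,b_i)\subseteq\psi(x,y)$ for all $i$, iff $T\subseteq\psi(x,y)$, iff $(x,y)\sqsubseteq(c,d)$. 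So the meet exists and $\psi$ sends it to the join.

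The two directions then combine as follows. Local exactness gives existence of the meet and the equation, so part~(1) yields admissibility. Admissibility includes existence of the meet, and then part~(2) yields the equation and hence local exactness.

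The main subtle point is the existence of the meet in the forward direction, which the order-embedding argument above handles. Everything else is direct substitution into Proposition~\ref{p: exact iff preserved by psi}.
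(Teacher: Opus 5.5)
Your proposal is correct and is exactly the paper's argument (apply Proposition~\ref{p: exact iff preserved by psi} to $\psi(a,b)=\cl(a)\cap\op(b)$ from $\mf{LC}(L)^{op}$ into $\SS(L)$), and you spell out the hypotheses and the existence of the meet of a locally exact family (namely its canonical representation) in more detail than the paper does. One caveat applies equally to the paper: the argument for part~(1) of that proposition needs $\psi(b)\cap\tbigvee_i\psi(a_i)=\tbigvee_i\bigl(\psi(b)\cap\psi(a_i)\bigr)$, which does not follow from $\SS(L)$ merely being a coframe (part~(1) fails for $C$ the closed subsets of the real line, $S=C^{op}$, $\psi=\mathrm{id}$), but this identity does hold here because every $\cl(a)\cap\op(b)$ is complemented, as you verified.
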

\begin{proof}
    Consider the map $\mf{LC}(L)^{op}\to \mf{S}(L)$ defined as $(a,b)\mapsto \cl(a)\cap \op(b)$. This is a map of meet-semilattices, and, as we know, it is injective. It is easy to see, via basic facts about $\sll$, that the map satisfies the hypotheses of Proposition~\ref{p: exact iff preserved by psi}, including those of its Item~(\ref{with complements generates}), and so the claim follows from said proposition.
\end{proof}

\section{Smooth sublocales as admissible upper sets}\label{sec.5}

The aim of this section is to relate the Bruns--Lakser completion of the join-semilattice $\mathsf{LC}(L)$ to the lattice of smooth sublocales $\mathsf{S}_b(L)$ introduced in Subsection~\ref{sb_prelim}.  
This mirrors the well known situation in Subsection~\ref{scl.BL}, where exact upper sets (equivalently, filters) of $L$ correspond to joins of closed sublocales. First note that there is a monotone map $$\varphi\colon \SS_b(L)\longrightarrow \ca{U}(\mathsf{LC}(L))$$
to the nonempty upper sets of $\mf{LC}(L)$, given by $\varphi(S)=\{(a,b)\in \cat{LC}(L)\mid \cl(a)\cap\op(b)\subseteq S\}$ (cf. Subsection~Subsection~\ref{brunslakser}). Indeed, by Lemma~\ref{lemmaordering111} and the definition of the ordering $\sqsubseteq$ this is an upper set of $\cat{LC}(L)$. Moreover, there is another monotone map $$\psi\colon \ca{U}(\mathsf{LC}(L))\to\SS_b(L)$$ given by $$\psi(U)=\tbigvee_{(c,d)\in U}\cl(c)\cap\op(d).$$

Clearly, the adjunction identities

$$ \psi\varphi(S)\subseteq S \quad \text{and} \quad U\subseteq  \varphi\psi(U)$$

hold. Since $\varphi$ is clearly injective, $\psi \circ \varphi=\text{id}$.

Note that, by Corollary~\ref{t: locally exact meet iff exact meet} a nonempty upper set $U\subseteq \mathsf{LC}(L)$ is admissible (in the sense of Subsection~\ref{brunslakser}) if whenever $\{(a_i,b_i)\}_{i\in I}\subseteq U$ is locally exact, then the meet $\bigsqcap_{i\in I} (a_i,b_i)$ in $\mathsf{LC}(L)$  belongs to $U$. 
This is in turn equivalent to the relation
$$\left( \tbigwedge_{i\in I} a_i,    \tbigwedge_{x\in L} \biggl(  \big(\tbigwedge _{i\in I} (b_i\to (x\vee a_i)) \big) \to \big( x\vee  \tbigwedge_{j\in I} a_j\bigr) \biggr)\right)\in U$$
holding whenever $\{(a_i,b_i)\}_{i\in I}\subseteq U$ is a locally exact family. Indeed,  if $\{(a_i,b_i)\}_{i\in I}$ is locally exact, and we denote  $S=\tbigvee_i \cl(a_i)\cap \op(b_i)$ then by local exactness   one has $S=\cl(\tbigwedge S)\cap \op(\nu_{S^\#}(\tbigwedge S))$, and so   $\cl(\tbigwedge S)\cap \op(\nu_{S^\#}(\tbigwedge S))$ is the join of $\{\cl(a_i)\cap \op(b_i)\}_{i\in I}$ in  the meet-semilattice of locally closed sublocales. Using the anti-isomorphism between $\mf{LC}(L)$ and the meet-semilattice of locally closed sublocales we then have that the meet of $\{(a_i,b_i)\}_{i\in I}$ in $\mathsf{LC}(L)$ must be $(\tbigwedge S,\nu_{S^\#}(\tbigwedge S))$, and so by Lemma~\ref{l: bwe S for a smooth sublocale S}, indeed we have the following formula for the meet of $\{(a_i,b_i)\}_{i\in I}$
\begin{equation}\label{e: meet of locally exact family}
\tbigsqcap_{i\in I} (a_i,b_i)=\left( \tbigwedge_{i\in I} a_i,    \tbigwedge_{x\in L} \biggl(  \big(\tbigwedge _{i\in I} (b_i\to (x\vee a_i)) \big) \to \big( x\vee  \tbigwedge_{j\in I} a_j\bigr) \biggr)\right).
\end{equation}

Following the notation introduced in Subsection~\ref{brunslakser}, we denote the family of admissible upper sets by $$\ca{AU}(\mf{LC}(L)).$$

\begin{lemma}\label{l: fixpoints phipsi}
The relation $\varphi(\psi(U))=U$ holds if and only if $U$ is admissible.
\end{lemma}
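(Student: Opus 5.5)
The argument has two directions; we use the Galois connection $\psi\dashv\varphi$ together with Corollary~\ref{t: locally exact meet iff exact meet} and Proposition~\ref{p: exact iff preserved by psi}.

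\emph{Fixpoints are admissible.} Assume $\varphi(\psi(U))=U$. It is enough to show that every set of the form $\varphi(S)$, with $S\in\SS_b(L)$, is admissible. First, $\varphi(S)$ is a nonempty upper set: it contains the top element of $\mathsf{LC}(L)$, namely the pair corresponding to the void sublocale $\cl(1)\cap\op(1)$. Now take an admissible family $\{(a_i,b_i)\}_{i\in I}\subseteq\varphi(S)$. By Corollary~\ref{t: locally exact meet iff exact meet} the family is locally exact. As explained before equation~\eqref{e: meet of locally exact family}, its meet $(a,b)=\tbigsqcap_i(a_i,b_i)$ therefore satisfies
\[\cl(a)\cap\op(b)=\tbigvee_i\cl(a_i)\cap\op(b_i).\]
Each $\cl(a_i)\cap\op(b_i)$ lies in $S$, so the join does too. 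Hence $(a,b)\in\varphi(S)$.

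\emph{Admissible sets are fixpoints.} Let $U$ be admissible. The inclusion $U\subseteq\varphi\psi(U)$ is automatic, so the work is the reverse inclusion. Take $(a,b)\in\mathsf{LC}(L)$ with $\cl(a)\cap\op(b)\subseteq\tbigvee_{(c,d)\in U}\cl(c)\cap\op(d)$; we must show $(a,b)\in U$.

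The plan is to express $(a,b)$ as an admissible meet of elements of $U$. Since $U$ is an upper set, for each $(c,d)\in U$ the join $(a,b)\sqcup(c,d)$ lies in $U$. Via the anti-isomorphism of Lemma~\ref{l: finite joins in LCL}, this join corresponds to the locally closed sublocale $\cl(a)\cap\op(b)\cap\cl(c)\cap\op(d)$. Distributivity of $\SS(L)$ over a complemented element (equivalently, the coframe law) gives
\[\tbigvee_{(c,d)\in U}\bigl(\cl(a)\cap\op(b)\cap\cl(c)\cap\op(d)\bigr)=\cl(a)\cap\op(b)\cap\psi(U)=\cl(a)\cap\op(b).\]
This join is locally closed. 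Hence the family $\{(a,b)\sqcup(c,d)\}_{(c,d)\in U}$ is locally exact, and so admissible by Corollary~\ref{t: locally exact meet iff exact meet}.

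Its meet is the element corresponding to the join sublocale, namely $(a,b)$. Closure of $U$ under admissible meets then gives $(a,b)\in U$.

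The expected obstacle is the distributivity step. Meets over an arbitrary join need not distribute in the coframe $\SS(L)$. They do distribute here because $\cl(a)\cap\op(b)$ is complemented: intersecting with a complemented element is a left adjoint (its right adjoint is join with the complement), so it preserves arbitrary joins. I would verify this explicitly.

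A smaller point also needs checking: that the meet in $\mathsf{LC}(L)$ of the locally exact family is exactly the canonical representation of the join sublocale, i.e.\ $(a,b)$. This follows from Lemma~\ref{lemmalocallyclosed1}\,(3) and injectivity of the anti-isomorphism.
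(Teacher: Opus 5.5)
Your proof is correct and follows the paper's argument. The easy direction matches the paper's. In the hard direction you use the same family $\{(a,b)\sqcup(c,d)\}_{(c,d)\in U}=\{\mi{lc}(c\vee a,d\wedge b)\}_{(c,d)\in U}\subseteq U$, the same distributivity of $\cl(a)\cap\op(b)$ over joins to see that the family is locally exact, and the same closure of $U$ under locally exact meets.

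The only difference is the last step. You identify the meet of that family as exactly $(a,b)$ via the anti-isomorphism with locally closed sublocales. The paper instead computes the meet $(c_0,d_0)$ from the explicit meet formula and checks $(c_0,d_0)\sqsubseteq(a,b)$ by Heyting-algebra manipulations. Your shortcut is valid and avoids that computation.
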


\begin{proof}
Assume first that  $\varphi(\psi(U))=U$, then $U=\{ (a,b)\in \mathsf{LC}(L)\mid \cl(a)\cap\op(b)\subseteq \tbigvee_{(c,d)\in U}\cl(c)\cap\op(d)\}$. Now if $\{(a_i,b_i)\}\subseteq U$ is a locally exact family, then $\tbigvee_{i\in I}\cl(a_i)\cap\op(b_i)$ is locally closed and moreover it equals 
$$\cl(\tbigwedge_i a_i)\cap \op\biggl(   \tbigwedge_{x\in L} \biggl(  \big(\tbigwedge _i (b_i\to (x\vee a_i)) \big) \to \big( x\vee  \tbigwedge_j a_j\bigr) \biggr) \biggr),$$
so by assumption, it follows $$\left(\tbigwedge a_i,   \tbigwedge_{x\in L} \biggl(  \big(\tbigwedge _i (b_i\to (x\vee a_i)) \big) \to \big( x\vee  \tbigwedge_j a_j\bigr) \biggr)\right)\in U.$$

Let us now show the converse. Assume that $U$ is closed under locally exact families. We only need to show that $$\left\{ (a,b)\in \mathsf{LC}(L)\mid \cl(a)\cap\op(b)\subseteq \tbigvee_{(c,d)\in U}\cl(c)\cap\op(d)\right\}\subseteq U.$$
Let $(a,b)\in \mathsf{LC}(L)$ with $ \cl(a)\cap\op(b)\subseteq \tbigvee_{(c,d)\in U}\cl(c)\cap\op(d)\subseteq U$. 
Then
\begin{align*}\cl(a)\cap \op(b)&= \tbigvee_{(c,d)\in U}\cl(c\vee a)\cap\op(d\wedge b) \\ &=\tbigvee_{(c,d)\in U}\cl\bigl((d\wedge b)\to (c\vee a)\bigr)\cap\op\bigl((d\wedge b)\vee (d\wedge b)\to(c\vee a)\bigr)
\end{align*}
Note that $A:=\biggl\{ \bigl((d\wedge b)\to (c\vee a), (d\wedge b)\vee (d\wedge b)\to (c\vee a)\bigr) \mid (c,d)\in U\biggr\}\subseteq \cat{LC}(L).$
We first claim that $ A\subseteq U$; since $U$ is an upper set it suffices to show that for any $(c,d)\in U$, one has $(c,d)\sqsubseteq  \bigl((d\wedge b)\to (c\vee a), (d\wedge b)\vee (d\wedge b)\to (c\vee a)\bigr)$, and that is very easy to check.
Hence $A\subseteq U$ and it is a locally exact family because the corresponding join of locally closed sublocales is locally closed. Now, set
$$a_{cd}=(d\wedge b)\to (c\vee a)\quad\text{and}\quad b_{cd}= (d\wedge b)\vee (d\wedge b)\to (c\vee a),$$
$$c_0=  \tbigwedge_{(c,d)\in U} (d\wedge b)\to (c\vee a)$$
and
\begin{align*}d_0&=  \tbigwedge_{x\in L} \biggl(  \big(\tbigwedge _{(c,d)\in U}(b_{cd}\to (x\vee a_{cd})) \big) \to \big( x\vee  \tbigwedge_{(c',d')\in U} a_{c'd'}\bigr)\biggr)\\&=
  \tbigwedge_{x\in L} \biggl(  \big(\tbigwedge _{(c,d)\in U}(b\wedge d)\to (x\vee a_{cd})) \big) \to \big( x\vee  \tbigwedge_{(c',d')\in U} a_{c'd'}\bigr)\biggr)
\end{align*}
(the second equality holds by an application of \ref{H11} and \ref{H2}).  Because $U$ is closed under locally exact families, it follows that $(c_0,d_0)\in U$.

To conclude the proof it suffices to show that $(c_0,d_0)\sqsubseteq (a,b)$. First,
$$c_0=   \tbigwedge_{(c,d)\in U} b\to(d\to(c\vee a)) = b\to \bigl( \tbigwedge_{(c,d)\in U} (d\to(c\vee a))\bigr)=b\to a=a
$$
where we have used \ref{H7}, \ref{H12} and the facts that $a\in\cl(a)\cap\op(b)\subseteq \tbigvee_{(c,d)\in U} \cl(c)\cap\op(d)$ and the frame surjection corresponding to the right hand side is $\tbigwedge_{(c,d)\in U}d\to (c\vee(-))$.

Next, we have to show that $b\leq a\vee d_0$. Note that $a\leq a_{cd}$ for all $(c,d)\in U$ and so $a\leq d_0$. Hence we need to show $b\leq d_0$. Let $x\in L$, we have
\begin{align*} b\wedge  \tbigwedge _{(c,d)\in U}(b\wedge d)\to (x\vee a_{cd})&\leq b\wedge  \tbigwedge _{(c,d)\in U}(b\wedge d)\to ((b\wedge d)\to (x\vee c\vee a)) &
\\& =  b\wedge\tbigwedge _{(c,d)\in U}(b\wedge d)\to  (x\vee c\vee a) & \text{by }\ref{H7}
\\& =  b\wedge\tbigwedge _{(c,d)\in U}b\to ( d\to  (x\vee c\vee a) ) & \text{by }\ref{H7}
\\& = \tbigwedge _{(c,d)\in U}b\wedge ( d\to  (x\vee c\vee a) )& \text{by }\ref{H5}
\\& = b\wedge  \tbigwedge _{(c,d)\in U}  d\to  (x\vee c\vee a) &
\\& = b\wedge( b\to (a\vee x)) &
\\& = b\wedge (a\vee x) & \text{by }\ref{H5}
\\& = a\vee (b\wedge x)\leq(\tbigwedge_{(c',d')\in U}a_{c'd'} )\vee x.&
\end{align*}

where again we have used that the frame surjection corresponding to $\cl(a)\cap\op(b)$ is $\tbigwedge_{(c,d)\in U} d\to (c\vee (-))$.
\end{proof}

\begin{corollary}\label{c: isomorphism SbL and LELCL}
There is an isomorphism $$\SS_b(L)\cong \ca{AU}(\mf{LC}(L)).$$
\end{corollary}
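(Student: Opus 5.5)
The plan is to restrict the adjunction $\psi\dashv\varphi$ between $\SS_b(L)$ and $\ca{U}(\mf{LC}(L))$ to its fixed points, exactly as in diagram~\eqref{l: diagram with adjunction for ScL and upsets} for $\SS_c(L)$. We already know two things. First, $\psi\circ\varphi=\mathrm{id}_{\SS_b(L)}$. Second, by Lemma~\ref{l: fixpoints phipsi}, the upper sets with $\varphi\psi(U)=U$ are precisely the admissible ones. It therefore suffices to check that $\varphi$ takes values in $\ca{AU}(\mf{LC}(L))$ and that $\psi$ restricted to $\ca{AU}(\mf{LC}(L))$ is inverse to $\varphi$.

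\emph{Step 1: $\varphi(S)$ is admissible.} For $S\in\SS_b(L)$ we have
\[
\varphi\psi(\varphi(S))=\varphi(\psi\varphi(S))=\varphi(S),
\]
so $\varphi(S)$ is a fixed point of $\varphi\psi$. Lemma~\ref{l: fixpoints phipsi} then gives that $\varphi(S)$ is admissible. We also need $\varphi(S)$ to be nonempty. This holds because $(1,1)\in\mf{LC}(L)$ and $\cl(1)\cap\op(1)=\mathsf{O}\subseteq S$.

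\emph{Step 2: mutual inverses.} For $U\in\ca{AU}(\mf{LC}(L))$, Lemma~\ref{l: fixpoints phipsi} gives $\varphi\psi(U)=U$, and $\psi\varphi=\mathrm{id}$ holds on $\SS_b(L)$. So the restrictions
\[
\varphi\colon\SS_b(L)\to\ca{AU}(\mf{LC}(L)),\qquad \psi|_{\ca{AU}(\mf{LC}(L))}\colon \ca{AU}(\mf{LC}(L))\to\SS_b(L)
\]
are mutually inverse. Both maps are monotone, $\varphi$ by its definition and $\psi$ because a larger $U$ yields a larger join. Hence they form an order isomorphism. Because both posets are complete lattices, this is also a lattice isomorphism, and in particular a frame isomorphism.

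\emph{Step 3: verifying $\psi\circ\varphi=\mathrm{id}$.} This step uses that every smooth sublocale is a join of locally closed sublocales. Given a locally closed $\cl(a)\cap\op(b)\subseteq S$, Lemma~\ref{lemmalocallyclosed1}(\ref{lemmalocallyclosed13}) lets us replace $(a,b)$ by its canonical representation $\mi{lc}(a,b)\in\mf{LC}(L)$, which describes the same sublocale. Therefore $S=\tbigvee_{(c,d)\in\varphi(S)}\cl(c)\cap\op(d)$.

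The only genuine difficulty is Lemma~\ref{l: fixpoints phipsi} itself, which is already established. Given it, the corollary is essentially the formal fixed-point argument above.
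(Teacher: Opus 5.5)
Your proposal is correct and is essentially the paper's argument: the corollary comes from restricting the adjunction $\psi\dashv\varphi$ to its fixed points, using $\psi\circ\varphi=\mathrm{id}$ together with Lemma~\ref{l: fixpoints phipsi}. Your extra checks, namely that $\varphi(S)$ is nonempty because it contains $(1,1)$ and that smooth sublocales are joins of canonically represented locally closed ones, are exactly the facts the paper takes for granted.
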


The following diagram summarises the situation (compare the following with diagram \eqref{l: diagram with adjunction for ScL and upsets} for $\Sc(L)$).

\[\begin{tikzcd}
	{\SS_b(L)} & \top & {\mathcal{U}(\mathsf{LC}(L))} \\
	\\
	{\SS_b(L)} && {\mathcal{AU}(\mathsf{LC}(L))}
	\arrow["\varphi", curve={height=-12pt}, from=1-1, to=1-3]
	\arrow["\psi", curve={height=-12pt}, from=1-3, to=1-1]
	\arrow[from=3-1, to=1-1, equal]
	\arrow["\cong"', from=3-1, to=3-3]
	\arrow[hook, from=3-3, to=1-3]
\end{tikzcd}\]

We will now use the characterisation in Corollary~\ref{t: locally exact meet iff exact meet} to describe the closure operator $\varphi\circ \psi$ explicitly. Because admissible upper sets are closed under intersection, they form a closure system, and so for every upper set $U\subseteq A$ there exists the least admissible upper set $\mi{cl}_\ca{A}(U)$ containing it. This is precisely $\varphi(\psi(U))$. Our main result will follow from an explicit description of $\mi{cl}_\ca{A}(-)$ in the general case of the Bruns--Lakser completion of a join-semilattice.

The following is easy to check using routine computations.
\begin{lemma}\label{l: exactness stable under binary joins}
    If $S$ is a join-semilattice, and $\{a_i\}_{i\in I}\subseteq S$ is a family whose meet is admissible, then the meet of $a_i\vee b$ is admissible for every $b\in S$.
\end{lemma}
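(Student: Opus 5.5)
The plan is to verify directly that $\tbigwedge_{i\in I}(a_i\vee b)$ exists and equals $a\vee b$, where $a:=\tbigwedge_{i\in I}a_i$, and then check that this meet is admissible. Admissibility of $\{a_i\}_{i\in I}$ already gives the identity $c\vee a=\tbigwedge_{i\in I}(c\vee a_i)$ for every $c\in S$. In particular, for $c=b$ the meet $\tbigwedge_{i\in I}(b\vee a_i)$ exists in $S$ and equals $b\vee a$.

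Next I would show that the family $\{a_i\vee b\}_{i\in I}$ is admissible. Fix $c\in S$. We need
\[
c\vee\tbigwedge_{i\in I}(a_i\vee b)=\tbigwedge_{i\in I}\bigl(c\vee a_i\vee b\bigr),
\]
including the existence of the right-hand meet. The left-hand side equals $c\vee b\vee a$ by the previous paragraph. Applying admissibility of $\{a_i\}_{i\in I}$ with the element $c\vee b$ in place of $b$ gives that $\tbigwedge_{i\in I}(c\vee b\vee a_i)$ exists and equals $(c\vee b)\vee a$. By associativity and commutativity of joins the two sides agree, and this proves the claim.

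The only point needing care is existence: in a mere join-semilattice, meets need not exist. Each required meet is supplied by the defining equation of admissibility, because that equation asserts the existence of $\tbigwedge_i(b\vee a_i)$ for every $b$. I expect this bookkeeping to be the only obstacle, and I see no genuine difficulty.
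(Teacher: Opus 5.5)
Your proof is correct and is exactly the routine computation the paper alludes to (the paper writes no proof): the meet of $\{a_i\vee b\}_{i\in I}$ exists and equals $b\vee\bigwedge_i a_i$ by admissibility of $\{a_i\}_{i\in I}$ at $b$. Its admissibility at any $c\in S$ then follows from admissibility of $\{a_i\}_{i\in I}$ at $c\vee b$, with the existence of each required meet built into the defining equation, as you note.
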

We also need another technical lemma.
\begin{lemma}\label{l: union of exact families is exact}
    Suppose that $S$ is a join-semilattice and that $\{S_i\}_{i\in I}$ is a collection of admissible families of $S$. Then, if $\{\tbigwedge S_i\mid i\in I\}$ is admissible, so is $\bigcup_i S_i$.
\end{lemma}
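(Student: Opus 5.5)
The plan is to check the two parts of admissibility directly: first that the meet of $\bigcup_i S_i$ exists, then that it distributes over binary joins. Both parts reduce to one elementary poset fact about iterated meets.

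\emph{Iterated meets.} Let $\{T_i\}_{i\in I}$ be families in a poset. Suppose each meet $t_i:=\tbigwedge T_i$ exists and $\tbigwedge_{i\in I} t_i$ exists. Then $\tbigwedge_{i\in I} t_i$ is also the meet of $\bigcup_i T_i$. To see this, note that $\tbigwedge_i t_i\leq t_i\leq s$ for every $s\in T_i$, so it is a lower bound of the union. Conversely, any lower bound $c$ of $\bigcup_i T_i$ is a lower bound of each $T_i$. Hence $c\leq t_i$ for all $i$, and so $c\leq \tbigwedge_i t_i$.

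\emph{Existence of the meet.} Put $m_i:=\tbigwedge S_i$, which exists because $S_i$ is admissible. The hypothesis gives that $m:=\tbigwedge_i m_i$ exists. Applying the iterated-meet fact with $T_i=S_i$ shows that $\tbigwedge\bigcup_i S_i$ exists and equals $m$.

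\emph{Distributivity over joins.} Fix $b\in S$. Admissibility of $\{m_i\}_{i\in I}$ gives $b\vee m=\tbigwedge_i (b\vee m_i)$, and this meet exists. Admissibility of each $S_i$ gives $b\vee m_i=\tbigwedge_{s\in S_i}(b\vee s)$, with these meets existing. Now apply the iterated-meet fact with $T_i=\{b\vee s\mid s\in S_i\}$. It yields that $\tbigwedge_{s\in\bigcup_i S_i}(b\vee s)$ exists and equals $\tbigwedge_i(b\vee m_i)=b\vee m$. This is exactly admissibility of $\bigcup_i S_i$.

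Every step is a routine order-theoretic verification, so I expect no real obstacle. The only care needed is to make sure the relevant meets are known to exist before the identities are combined. That existence is precisely what the hypotheses, together with the iterated-meet fact, supply.
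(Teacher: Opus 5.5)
Your proof is correct and follows the paper's argument, which is the same chain of equalities $(\bigwedge_i \bigwedge S_i)\vee b=\bigwedge_i(\bigwedge S_i\vee b)=\bigwedge\{s\vee b\mid s\in\bigcup_i S_i\}$. The only difference is that you explicitly justify, via your iterated-meet fact, that these meets exist and that $\bigwedge\bigcup_i S_i=\bigwedge_i\bigwedge S_i$, whereas the paper uses both points tacitly.
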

\begin{proof}
    Let $a\in S$. We have $\tbigwedge \bigcup_i S_i=\tbigwedge_i \tbigwedge S_i$. As $\{\tbigwedge S_i\}_{i\in I}$ is an admissible family, $(\tbigwedge_i \tbigwedge S_i )\ve a=\tbigwedge_i (\tbigwedge S_i \vee a)$, and as each $S_i$ is admissible, this equals $\tbigwedge_i (\tbigwedge \{s\vee a\mid s\in S_i\})=\tbigwedge \{s\ve a\mid s\in \bigcup_i S_i\}$.
\end{proof}
For $U\se S$ an upper set, we define
\[
\ca{A}(U)=\{\tbigwedge F\mid F\se U,F\text{ is admissible}\}.
\]
The following lemma is essentially proven in \cite[Lemma~3]{BrunsLakser1970}, we report the proof here for the reader's convenience. 

\begin{lemma}\label{l: only one step needed for exact closure}
    If $S$ is a join-semilattice, and $U\se S$ is an upper set, then $\ca{A}(U)$ is the least admissible upper set containing $U$.
\end{lemma}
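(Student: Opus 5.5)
We have to prove three things: $U\subseteq\ca{A}(U)$; $\ca{A}(U)$ is a nonempty upper set closed under admissible meets; and any admissible upper set containing $U$ contains $\ca{A}(U)$.

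The first and third are immediate. For $u\in U$ the singleton $\{u\}$ is admissible, since its meet is $u$ and $b\vee u=b\vee u$ trivially. Hence $u=\tbigwedge\{u\}\in\ca{A}(U)$, which also gives nonemptiness because $U$ is nonempty. Conversely, if $V$ is an admissible upper set with $U\subseteq V$ and $F\subseteq U$ is admissible, then $F\subseteq V$, so $\tbigwedge F\in V$. Thus $\ca{A}(U)\subseteq V$.

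For the upper-set property, take $\tbigwedge F\in\ca{A}(U)$ with $F\subseteq U$ admissible, and let $b\geq\tbigwedge F$. Consider the family $F\vee b:=\{s\vee b\mid s\in F\}$. It lies in $U$ because $U$ is an upper set. By Lemma~\ref{l: exactness stable under binary joins} it is admissible. By admissibility of $F$, its meet is $\tbigwedge_{s\in F}(s\vee b)=b\vee\tbigwedge F=b$. Hence $b\in\ca{A}(U)$.

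The main step is closure under admissible meets. Let $\{c_i\}_{i\in I}\subseteq\ca{A}(U)$ be an admissible family, and write $c_i=\tbigwedge F_i$ with each $F_i\subseteq U$ admissible. The family $\bigcup_i F_i$ is contained in $U$, and its meet exists and equals $\tbigwedge_i c_i$: any lower bound of the union is below each $c_i$, hence below $\tbigwedge_i c_i$, and $\tbigwedge_i c_i$ is itself a lower bound of the union. By Lemma~\ref{l: union of exact families is exact}, $\bigcup_i F_i$ is admissible. Therefore $\tbigwedge_i c_i=\tbigwedge\bigcup_i F_i\in\ca{A}(U)$.

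The only delicate point is checking that the meet of the union exists, and this is exactly what the lower-bound argument above supplies. With all three properties established, $\ca{A}(U)$ is the least admissible upper set containing $U$.
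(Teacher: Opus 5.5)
Your proof is correct and follows the paper's argument: you get upward closure from the family $\{s\vee b\mid s\in F\}$ together with Lemma~\ref{l: exactness stable under binary joins}, and closure under admissible meets by applying Lemma~\ref{l: union of exact families is exact} to $\bigcup_i F_i$. You also spell out the routine points the paper leaves implicit: $U\subseteq\ca{A}(U)$ via singletons, minimality, and the existence of the meet of the union.
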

\begin{proof}
It suffices to show that $\ca{A}(U)$ is an admissible upper set, already. First, we want to show that it is an upper set. Let $x\in \ca{A}(U)$ and $x\leq y$. By definition of $\ca{A}(U)$, there is an admissible family $\{u_i\}_{i\in I}\subseteq U$ such that $x=\tbigwedge_i u_i$. Then, $u_i\vee y\in U$ as $U$ is an upper set, and the family is admissible by Lemma~\ref{l: exactness stable under binary joins}. So, $\tbigwedge_i (u_i\vee y)\in \ca{A}(U)$, but by admissibility of $\tbigwedge_i u_i$ this equals $\tbigwedge_i u_i\vee y$, which is equal to $y$ by our assumptions. So $y\in \ca{A}(U)$. To see that $\ca{A}(U)$ is admissible, suppose that $x_i\in \ca{A}(U)$ and that the meet $\tbigwedge_i x_i$ is admissible. For each $i\in I$, let $S_i\se S$ be the admissible family with $x_i=\tbigwedge S_i$. By Lemma~\ref{l: union of exact families is exact}, $\bigcup_i S_i\se U$ is an admissible family, and so the admissible meet $\tbigwedge \bigcup_i S_i=\tbigwedge_i \tbigwedge S_i$ must be in $\ca{A}(U)$.
\end{proof}

By Corollary~\ref{t: locally exact meet iff exact meet}, and by Lemma~\ref{l: only one step needed for exact closure}, we obtain the following explicit description of $\varphi\circ \psi$.
\begin{proposition}\label{p: what is phipsi explicitly}
    For a frame $L$ and for an upper set $U\se \mf{LC}(L)$, $\varphi(\psi(U))=\ca{A}(U)=\{\tbigsqcap F\mid F\se  U\text{, $F$ is locally exact}\}.$
\end{proposition}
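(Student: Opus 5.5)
The plan is to chain three facts already established, namely Lemma~\ref{l: fixpoints phipsi}, Lemma~\ref{l: only one step needed for exact closure} and Corollary~\ref{t: locally exact meet iff exact meet}, so that both equalities become formal consequences.

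\emph{Step 1: $\varphi(\psi(U))$ is the least admissible upper set containing $U$.} I will use the adjunction identities $U\se\varphi\psi(U)$ and $\psi\varphi=\mathrm{id}$. These give $\varphi\psi\varphi\psi=\varphi\psi$, so $\varphi\psi(U)$ is a fixpoint of $\varphi\psi$. By Lemma~\ref{l: fixpoints phipsi} it is therefore admissible, and it contains $U$. Now let $V\supseteq U$ be any admissible upper set. Then $V=\varphi\psi(V)$ by Lemma~\ref{l: fixpoints phipsi}. Since $\varphi$ and $\psi$ are monotone, $\varphi\psi(U)\se\varphi\psi(V)=V$. Hence $\varphi\psi(U)$ is the least admissible upper set containing $U$.

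\emph{Step 2: identify this closure with $\ca{A}(U)$.} By Lemma~\ref{l: only one step needed for exact closure}, applied to the join-semilattice $\mf{LC}(L)$ (Lemma~\ref{l: finite joins in LCL}), $\ca{A}(U)$ is also the least admissible upper set containing $U$. By uniqueness of least elements, $\varphi\psi(U)=\ca{A}(U)$.

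\emph{Step 3: rewrite $\ca{A}(U)$.} Corollary~\ref{t: locally exact meet iff exact meet} says that a family $F\se\mf{LC}(L)$ is admissible exactly when it is locally exact. Here admissibility includes existence of the meet, and local exactness guarantees that the meet $\tbigsqcap F$ exists, given by \eqref{e: meet of locally exact family}. So in the definition of $\ca{A}(U)$ the condition ``$F$ admissible'' can be replaced by ``$F$ locally exact'', which gives the stated set.

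The only point needing care is Step 1, specifically that the fixpoints of $\varphi\psi$ are closed under the closure, i.e.\ that $\varphi\psi$ is idempotent. This follows from $\psi\varphi=\mathrm{id}$, which the paper derives from injectivity of $\varphi$ together with the adjunction identities. I will also check that $\varphi\psi(U)$ is nonempty, which holds because it contains the nonempty set $U$. Everything else is formal.
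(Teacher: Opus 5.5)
Your proposal is correct and takes the same route as the paper. The paper identifies $\varphi\psi(U)$ as the least admissible upper set containing $U$ via the adjunction and Lemma~\ref{l: fixpoints phipsi}, then uses Lemma~\ref{l: only one step needed for exact closure} to obtain $\ca{A}(U)$ and Corollary~\ref{t: locally exact meet iff exact meet} to replace ``admissible'' by ``locally exact''; you simply spell out the closure-operator step that the paper asserts in one line.
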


\section{Lifting of morphisms}
\label{sec.lifts}

In this section, we use the characterisation of $\Sb(L)$ as the Bruns--Lakser completion of $\mathsf{LC}(L)$ to give a characterisation of the frame maps which lift to the construction. The following is shown in \cite[Corollary~1]{BrunsLakser1970}.
\begin{lemma}
    For a join-semilattice $S$, the collection $\ca{AU}(S)$ is a frame.
\end{lemma}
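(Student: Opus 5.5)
The plan is to show first that $\ca{AU}(S)$ is a complete lattice and then that binary meets distribute over arbitrary joins. Completeness is immediate. $\ca{AU}(S)$ is closed under arbitrary intersections: an intersection of admissible upper sets is an upper set, and it is closed under admissible meets. It is nonempty whenever the intersection is, and otherwise we adjoin the top appropriately, or use that $S$ itself is the top element. So $\ca{AU}(S)$ is a closure system inside $\ca{U}(S)$, with closure operator $\ca{A}(-)$ by Lemma~\ref{l: only one step needed for exact closure}. Meets are intersections and joins are $\tbigvee_i U_i=\ca{A}(\tbigcup_i U_i)$. Since unions of upper sets are upper sets, Lemma~\ref{l: only one step needed for exact closure} applies directly to $\tbigcup_i U_i$.

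For the frame distributivity law $V\cap \ca{A}(\tbigcup_i U_i)\se \ca{A}(\tbigcup_i (V\cap U_i))$, I take $x\in V$ with $x=\tbigwedge F$ for some admissible $F\se\tbigcup_i U_i$, and replace $F$ by $F'=\{f\vee x\mid f\in F\}$. Each $f\vee x$ lies in $V$, since $V$ is an upper set and $x\in V$. It also lies in some $U_i$, since $f\in U_i$ and $U_i$ is an upper set. Hence $F'\se\tbigcup_i(V\cap U_i)$. By Lemma~\ref{l: exactness stable under binary joins}, $F'$ is admissible. By admissibility of $F$,
\[
\tbigwedge F'=\tbigwedge_{f\in F}(f\vee x)=\bigl(\tbigwedge F\bigr)\vee x=x\vee x=x,
\]
so $x\in\ca{A}(\tbigcup_i(V\cap U_i))$. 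The reverse inclusion is monotonicity, which completes the infinite distributive law.

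The only delicate point I expect is bookkeeping. One must check that Lemma~\ref{l: exactness stable under binary joins} really yields admissibility of the family $\{f\vee x\}$ together with the computation of its meet. That computation is exactly the admissibility equation for $F$ applied with $b=x$. The nonemptiness convention for the empty join must also be handled. I would treat the bottom of $\ca{AU}(S)$ as $\ca{A}$ of the least nonempty upper set, or simply note that the empty-family case of distributivity is trivial once joins are computed as $\ca{A}$ of unions.
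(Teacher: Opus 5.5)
Your distributivity step is correct, and it is the standard argument. The paper gives no proof of this lemma, only the citation to Bruns--Lakser, whose argument is the order dual of yours: replace an admissible family $F$ witnessing $x\in\ca{A}(\tbigcup_i U_i)$ by $\{f\vee x\mid f\in F\}$. This family still has meet $x$ by admissibility, and it now lies in $\tbigcup_i(V\cap U_i)$.

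The gap is in the completeness step. The paper requires admissible upper sets to be nonempty, and under that convention the sentence ``otherwise we adjoin the top appropriately, or use that $S$ itself is the top element'' repairs nothing. That $S$ is the top of $\ca{AU}(S)$ only gives the meet of the empty family of upper sets. Adjoining a top to $S$ changes the object.

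In fact, if $S$ has no greatest element, then $\ca{AU}(S)$ has no least element at all. Take a nonempty admissible $B$ and $s\in B$, pick $u\nleq s$, and put $t=s\vee u>s$. The principal upper set $\up t$ is admissible, since any existing meet of elements above $t$ is above $t$, and $B\nsubseteq \up t$. For example, for $S=\{0<1<2<\cdots\}$ one gets $\ca{AU}(S)=\{\up n\mid n\geq 0\}$, a descending chain with no bottom, so not a frame. Your fallback ``the least nonempty upper set'' therefore fails exactly where it is needed.

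So the lemma needs the extra hypothesis that $S$ has a top $1$, or else the empty upper set must be admitted. The hypothesis holds wherever the paper uses the lemma: $\mf{LC}(L)$ has top $(1,1)$, and frames are bounded. With a top, the bookkeeping is immediate:
\begin{itemize}
\item Every nonempty upper set contains $1$, so intersections of admissible upper sets are nonempty.
\item The empty family is admissible with meet $1$, so $\ca{A}(\emptyset)=\{1\}$ is the bottom.
\item Hence $\tbigvee_i U_i=\ca{A}(\tbigcup_i U_i)$ holds for all families, including the empty one.
\end{itemize}
Your distributivity argument then goes through verbatim; an empty $F$ just gives $x=1$.
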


\begin{lemma}\label{l: up preserves meet iff exact}\label{l: uparrow turns exact meets into joins}
    If $S$ is a join-semilattice:
    \begin{enumerate}
        \item \label{up preserves all finite joins}$\up:S\to \ca{AU}(S)$ turns all existing joins into meets;
        \item \label{up preserves the lc meets} $\up:S\to \ca{AU}(S)$ turns an existing meet into a join if and only if it is admissible.
    \end{enumerate}
    
\end{lemma}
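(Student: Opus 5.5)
The plan is to work directly with the order on $\ca{AU}(S)$, which is reverse-compatible with $\up$: if $a\le b$ then $\up b\se \up a$. Since $\ca{AU}(S)$ is closed under arbitrary intersections, meets in $\ca{AU}(S)$ are intersections. Joins in $\ca{AU}(S)$ are the least admissible upper sets containing the union, so by Lemma~\ref{l: only one step needed for exact closure} we have $\tbigvee_i U_i=\ca{A}(\tbigcup_i U_i)$. For each item I would just compute these two lattice operations on principal upper sets.

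For item~(\ref{up preserves all finite joins}), let $a=\tbigvee_i a_i$ exist in $S$. An element $x$ lies in $\tbigcap_i \up a_i$ exactly when $x\ge a_i$ for all $i$, i.e.\ exactly when $x\ge a$. Hence $\tbigcap_i\up a_i=\up a$. Two small checks remain. First, $\up a$ is itself admissible: any admissible meet of elements above $a$ is still above $a$, because it is the meet, i.e.\ a greatest lower bound, and $a$ is a lower bound. Second, the intersection is indeed the meet in $\ca{AU}(S)$. For the empty family, the join is the bottom of $S$ if it exists, and $\up\bot=S$ is the top of $\ca{AU}(S)$, so the same computation covers it.

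For item~(\ref{up preserves the lc meets}), let $m=\tbigwedge_i a_i$ exist. Since $\up m\supseteq \up a_i$ for each $i$ and $\up m$ is admissible, we always have $\ca{A}(\tbigcup_i\up a_i)\se \up m$. Thus the question is exactly whether $m\in\ca{A}(\tbigcup_i \up a_i)$.

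If the meet is admissible, then $\{a_i\}\se \tbigcup_i\up a_i$ is an admissible family with meet $m$. So $m\in \ca{A}(\tbigcup_i\up a_i)$ by definition of $\ca{A}$, which gives equality.

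Conversely, suppose $\up m=\ca{A}(\tbigcup_i \up a_i)$. Then $m=\tbigwedge F$ for some admissible family $F\se \tbigcup_i\up a_i$. For an arbitrary $b\in S$ we must show $b\vee m=\tbigwedge_i (b\vee a_i)$. Each $f\in F$ lies above some $a_{i(f)}$, so $b\vee f\ge b\vee a_{i(f)}\ge \tbigwedge_i(b\vee a_i)$ whenever the latter exists. Admissibility of $F$ then gives
\[
b\vee m=\tbigwedge_{f\in F}(b\vee f),
\]
and $b\vee m$ is a lower bound of all $b\vee a_i$.

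Conversely, any lower bound $c$ of $\{b\vee a_i\}$ is a lower bound of $\{b\vee f\}_{f\in F}$, so $c\le \tbigwedge_f(b\vee f)=b\vee m$. Therefore $b\vee m$ is the greatest lower bound, which shows both that the meet $\tbigwedge_i(b\vee a_i)$ exists and that it equals $b\vee m$. This establishes admissibility.

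The main obstacle I expect is this last step. One must not presuppose that $\tbigwedge_i (b\vee a_i)$ exists; it has to be produced from $F$ via the greatest-lower-bound argument just described. The rest consists of routine bookkeeping about principal upper sets and the description of joins through Lemma~\ref{l: only one step needed for exact closure}.
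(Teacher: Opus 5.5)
Your proof is correct and follows the paper's argument essentially step for step. Item (1) goes through $\bigcap_i\up a_i=\up\tbigvee_i a_i$. For item (2), you identify the join with $\ca{A}(\tbigcup_i\up a_i)$ via Lemma~\ref{l: only one step needed for exact closure} and extract an admissible family $F$ with $\tbigwedge F=\tbigwedge_i a_i$. Your greatest-lower-bound argument showing that $\tbigwedge_i(b\vee a_i)$ actually exists is a useful extra detail, since the paper's proof tacitly assumes that this meet exists.
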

\begin{proof}
Let us prove the two items in turn.
\begin{enumerate}
    \item For $\{a_i\}_{i\in I}\subseteq S$, suppose that the join $\tbigvee_i a_i\in S$ exists. The meet of $\up a_i$ in $\ca{AU}(S)$ is $\bca_i\up a_i$ (recall that meets in $\ca{AU}(S)$ are intersections). But this is $\up \tbigvee_i a_i$.   
    \item  Suppose that the family $\{a_i\}_{i\in I}\subseteq S$ is admissible. Clearly $\up \tbigwedge_i a_i$ is an upper bound for the family $\up a_i$. If $\up a_i\subseteq E$ for an admissible upper set, $\up \tbigwedge_i a_i\subseteq E$ by admissibility of $\tbigwedge_i a_i$, which shows it is the least upper bound. For the other direction, suppose that there is a family $\{a_i\}_{i\in I}\subseteq S$ whose meet exists and such that $\up \tbigwedge_i a_i$ is the least upper bound of $\{\up a_i\mid i\in I\}$ in $\ca{AU}(S)$. By Lemma~\ref{l: only one step needed for exact closure}, $\up \tbigwedge_i a_i=\ca{A}(\bigcup_i \up a_i)$. Then there is an admissible family $\{x_j\}_{j\in J}\subseteq \bigcup_i \up a_i$ such that $\tbigwedge_j x_j=\tbigwedge_i a_i$. We want to show that $\tbigwedge_i a_i$ is admissible. Let $b\in S$. By our assumption on $x_j$, for every $j\in J$ there is $i\in I$ such that $a_i\ve b\leq x_j\ve b$. Then, $\tbigwedge_i (a_i\ve b)\leq x_j\ve b$ for all $j\in J$. So:
\[
\tbigwedge_i (a_i \ve b)\leq \tbigwedge_j (x_j\ve b)=(\tbigwedge_j x_j)\ve b=(\tbigwedge_i a_i)\ve b.
\]
    Then, the meet $\tbigwedge_i a_i$ is admissible. \qedhere
\end{enumerate}
\end{proof}

A version of the following result is stated as already known in \cite{Gehrke14}. However, we did not find in the literature an explicit proof of this fact. We call a map between join-semilattices $f:S\to T$ \emph{admissible} if it maps admissible families to admissible families, and it also preserves their meets. If a frame map $\ca{AU}(f):\ca{AU}(S)\to \ca{AU}(T)$ extending $f$ exists, since it has to preserve all joins the definition is forced as 
\[
\ca{AU}(f)(U)=\ca{A}(\tbigcup\{\up f(x)\mid x\in U\})
\]
for every admissible upper set $U\se L$. In order for such $\ca{AU}(f)$ to be well defined we need that for all $x\in L$ the following condition holds. 
 \begin{equation*}
x\in \ca{A} (\tbigcup_i U_i)\text{ implies }f(x)\in \ca{A}(\tbigcup_i \up f[U_i]).   \tag{WD}
\end{equation*}
Conversely, if this condition holds, then it is easy to check that $\ca{AU}(f)$ is also a frame map. The existence of a lift is then equivalent to \eqref{e: wd'}. We will use this characterisation several times, in the coming proof and in the later work.

\begin{theorem}\label{t: characterize those that lift general}
    If $f:S\to T$ is a map between join-semilattices, then it lifts to a frame map $\ca{AU}(S)\to \ca{AU}(T)$ if and only if $f$ is admissible.
\end{theorem}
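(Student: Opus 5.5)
We prove both directions through the well-definedness condition \eqref{e: wd'}. As noted just before the statement, a lift exists if and only if \eqref{e: wd'} holds, and the lift is then $\ca{AU}(f)(U)=\ca{A}(\tbigcup\{\up f(x)\mid x\in U\})$. A lift must also satisfy $\ca{AU}(f)(\up x)=\up f(x)$, so that it commutes with the embeddings $\up$.

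\emph{Necessity.} Suppose a frame map $\overline{f}\colon\ca{AU}(S)\to\ca{AU}(T)$ with $\overline{f}(\up x)=\up f(x)$ exists, and let $\tbigwedge_i a_i$ be an admissible meet in $S$. By Lemma~\ref{l: uparrow turns exact meets into joins}\,(\ref{up preserves the lc meets}), $\up\tbigwedge_i a_i=\tbigvee_i\up a_i$ in $\ca{AU}(S)$. Since $\overline{f}$ preserves joins, $\up f(\tbigwedge_i a_i)=\tbigvee_i \up f(a_i)=\ca{A}(\tbigcup_i\up f(a_i))$.

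First, $f(\tbigwedge_i a_i)$ is a lower bound of the $f(a_i)$, because $\up f(a_i)\se\up f(\tbigwedge_i a_i)$. Let $c$ be any lower bound of the $f(a_i)$. Then $\up c$ is an admissible upper set containing every $\up f(a_i)$, so it contains their join $\up f(\tbigwedge_i a_i)$, giving $c\le f(\tbigwedge_i a_i)$. Hence $f(\tbigwedge_i a_i)=\tbigwedge_i f(a_i)$ exists in $T$. Knowing that this meet exists, Lemma~\ref{l: uparrow turns exact meets into joins}\,(\ref{up preserves the lc meets}), applied in the converse direction in $T$, shows that it is admissible. So $f$ is admissible.

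\emph{Sufficiency.} Assume $f$ is admissible. We verify \eqref{e: wd'}. Let $x\in\ca{A}(\tbigcup_i U_i)$. By Lemma~\ref{l: only one step needed for exact closure}, $x=\tbigwedge F$ for some admissible family $F\se\tbigcup_i U_i$. By admissibility of $f$, the family $f[F]$ is admissible and $f(x)=\tbigwedge f[F]$. Since $f[F]\se\tbigcup_i\up f[U_i]$, we get $f(x)\in\ca{A}(\tbigcup_i\up f[U_i])$.

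It remains to show that the induced $\ca{AU}(f)$ is a frame map, which the text calls easy; we check it directly.
\begin{itemize}
\item \emph{Joins.} The map is the composite of $U\mapsto\tbigcup\up f[U]$, which preserves unions, with the closure $\ca{A}$. Since joins in $\ca{AU}$ are $\ca{A}$ of unions, \eqref{e: wd'} gives $\ca{A}(\tbigcup_i \up f[\ca{A}(V_i)])=\ca{A}(\tbigcup_i\up f[V_i])$.
\item \emph{Top.} The top element is $S$. Its image contains $\up f(s)$ for any $s$, and an admissible upper set containing an element contains everything above it. I would check that the empty meet, i.e.\ the top of $T$ if it exists, is handled correctly, or note that $\ca{A}$ of a nonempty upper set is the top when appropriate.
\item \emph{Binary meets.} This is the main obstacle. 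Meets in $\ca{AU}$ are intersections, and we need $\ca{A}(\up f[U])\cap\ca{A}(\up f[V])\se\ca{A}(\up f[U\cap V])$. An element of the left side is $\tbigwedge_j f(u_j)$ and also $\tbigwedge_k f(v_k)$, with the families $\{f(u_j)\}$ and $\{f(v_k)\}$ admissible (after enlarging within upper sets). Using Lemma~\ref{l: exactness stable under binary joins} and admissibility twice, $y=\tbigwedge_j\tbigwedge_k (f(u_j)\vee f(v_k))=\tbigwedge_{j,k} f(u_j\vee v_k)$. By Lemma~\ref{l: union of exact families is exact}, the family $\{f(u_j\vee v_k)\}_{j,k}$ is admissible. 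Since $u_j\vee v_k\in U\cap V$, this puts $y$ in $\ca{A}(\up f[U\cap V])$.

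There is one subtlety: elements of $\ca{A}(\up f[U])$ are meets of admissible families in $\up f[U]$, not necessarily of the form $f(u)$. The same distributive computation still works with elements $t_j\ge f(u_j)$ and $t'_k\ge f(v_k)$, because $t_j\vee t'_k\ge f(u_j\vee v_k)$. Hence each $t_j\vee t'_k$ lies in $\up f[U\cap V]$, and the admissible meet stays in the closure.
\end{itemize}
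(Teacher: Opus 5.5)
Your sufficiency direction is the paper's: you verify \eqref{e: wd'} by pushing an admissible $F\subseteq\bigcup_i U_i$ forward along $f$. Your necessity proof is correct but takes a genuinely different route. The paper argues by contraposition. It splits non-admissibility into three cases: the meet of $f[F]$ does not exist; it exists but is not admissible; or it is admissible but differs from $f(\bigwedge F)$. In each case it exhibits a failure of \eqref{e: wd'}, which rules out a lift.

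You instead work directly with the lift. Join preservation turns $\uparrow\bigwedge_i a_i=\bigvee_i\uparrow a_i$ into $\uparrow f(\bigwedge_i a_i)=\bigvee_i\uparrow f(a_i)$. Testing this against the principal upper sets $\uparrow c$ (admissible, since they are closed under all existing meets) shows in one stroke that $f(\bigwedge_i a_i)$ is the greatest lower bound of the $f(a_i)$. Item~(2) of Lemma~\ref{l: uparrow turns exact meets into joins} then gives admissibility.

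This merges the paper's first and third cases into a single lower-bound argument and turns the second into a citation. It also makes visible that only join preservation and compatibility with $\uparrow$ are used. The paper's route has the merit of staying inside the \eqref{e: wd'} formulation, which is what gets reused for \eqref{e: wdb}.

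Your explicit checks of joins and binary meets fill in what the paper calls ``easy to check''. The binary-meet argument, writing $y=\bigwedge_{j,k}(t_j\vee t'_k)$ and invoking Lemmas~\ref{l: exactness stable under binary joins} and~\ref{l: union of exact families is exact}, is sound. In the join bullet the closure should be taken of $\bigcup_i V_i$ rather than of each $V_i$; that is exactly what \eqref{e: wd'} provides.

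The one loose end is the ``Top'' bullet, and the empty meet there is a red herring. Because $\uparrow$ is antitone, the top of $T$ governs the \emph{bottom} of $\mathcal{AU}(T)$, which your join bullet already handles via \eqref{e: wd'} for the empty family. The top of $\mathcal{AU}(S)$ is $S=\uparrow 0_S$. So $\mathcal{AU}(f)(S)=\mathcal{A}(\uparrow f[S])=T$ follows from $f(0_S)=0_T$, i.e.\ from $f$ preserving the empty join.

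You should state this convention explicitly, because without it the statement fails. Take $S$ a single point and let $f$ send it to the top of the two-element chain $T$. Then $f$ preserves binary joins and is admissible. Yet $\mathcal{AU}(S)$ is trivial while $\mathcal{AU}(T)$ has two elements, so no frame map between them exists.
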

\begin{proof}
Suppose that $f:S\to T$ is a map of join-semilattices. Suppose that $f$ is admissible and $x\in \mathcal{A}(\bigcup_i U_i)$. By Lemma~\ref{l: only one step needed for exact closure}, there is an admissible family $F\se \tbigcup_i U_i$ with $x=\tbigwedge F$. By assumption on $f$, one has  that $\tbigwedge f[F]$ exists and is admissible, and $\tbigwedge f[F]=f(\tbigwedge F)=f(x)$. Since $f[F]\se \tbigcup_i \up f[U_i]$, one has $f(x)\in \ca{A}(\tbigcup_i \up f[U_i])$, as desired. Now suppose that $f$ is not admissible. We will consider three separate cases. In all three cases we will prove that for some admissible family $F$:

\begin{equation}\label{negation of WD'}
\tbigwedge F\in \ca{A} (\tbigcup_{y\in F} \up y)\text{ and }f(\tbigwedge F)\notin \ca{A}(\tbigcup_{y\in F} \up f(y)),
\end{equation}
thus contradicting \eqref{e: wd'}.

\begin{itemize}
    \item Suppose that $F\se S$ is admissible but the meet of the family $f[F]$ does not exist in $T$. By admissibility of $F$, we must have $\tbigwedge F\in\ca{A}(\tbigcup_{y\in F}\up y$). Suppose, towards contradiction, that \eqref{negation of WD'} does not hold, i.e. that there is some admissible family $G\subseteq \tbigcup_{y\in F}\up f(y)$ with $\tbigwedge G=f(\tbigwedge F)$. We will reach a contradiction by deducing that $f[F]$ has a meet, in particular that this is $\tbigwedge G$. This is clearly a lower bound. Suppose that $x\leq f(y)$ for all $y\in F$. For each $g\in G$, there is by assumption $y_g\in F$ with $f(y_g)\leq g$. Then, for each $g\in G$, $x\leq f(y_g)\leq g$, from which $x\leq \tbigwedge G$.
    \item Suppose, now, that $F\se S$ is admissible and the meet of $f[F]$ exists but is not admissible. By Lemma~\ref{l: uparrow turns exact meets into joins}, then, $\up \tbigwedge F=\tbigvee_{y\in F}\up y$, and $\up \tbigwedge f[F]\nsubseteq\tbigvee_{y\in F}\up f(y)$, so $ \tbigwedge f[F]\notin\tbigvee_{y\in F}\up f(y)$. As $f(\tbigwedge F)\leq \tbigwedge f[F]$, then, $f(\tbigwedge F)\notin \tbigvee_{y\in F}\up f(y)$. Then, \eqref{negation of WD'} holds.
    \item Suppose that $F\se S$ is admissible and so is $f[F]$ but $\tbigwedge f[F]\nleq f(\tbigwedge F)$. By Lemma~\ref{l: uparrow turns exact meets into joins}, $\tbigvee_{y\in F}\up f(y)=\ca{A}(\tbigcup_{y\in F}\up f(y))=\up \tbigwedge f[F]$. Then, \eqref{negation of WD'} holds once again.\qedhere
\end{itemize}
\end{proof}

\begin{lemma}\label{l: we link wd with wdb}
For a frame $L$, for $(x,y)\in L\times L$ and $\{(x_i,y_i)\}_{i\in I}\subseteq L\times L$:
\[
\cl(x)\cap \op(y)\se \tbigvee_i \cl(x_i)\cap \op(y_i)\text{ if and only if  }\mi{lc}(x,y)\in \ca{A}(\up \{\mi{lc}(x_i,y_i)\mid i\in I\}).
\]

\end{lemma}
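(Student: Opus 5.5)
The plan is to route everything through the Galois connection $\psi\dashv\varphi$ of Section~\ref{sec.5} and the identification $\varphi\circ\psi=\ca{A}$ of Proposition~\ref{p: what is phipsi explicitly}.

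First I record the effect of $\mi{lc}$. For $(x,y)\in L\times L$, by the discussion preceding Lemma~\ref{l: finite joins in LCL} (which rests on Lemma~\ref{lemmalocallyclosed1}), $\mi{lc}(x,y)\in\mf{LC}(L)$ is the canonical representation of $\cl(x)\cap\op(y)$. Writing $\mi{lc}(x,y)=(a,b)$, this means $\cl(a)\cap\op(b)=\cl(x)\cap\op(y)$. The same holds for each $(x_i,y_i)$, with $\mi{lc}(x_i,y_i)=(a_i,b_i)$.

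Next, set $U=\up\{(a_i,b_i)\mid i\in I\}\subseteq\mf{LC}(L)$, which is a nonempty upper set when $I\neq\emptyset$. Then
\[
\psi(U)=\tbigvee_{(c,d)\in U}\cl(c)\cap\op(d)=\tbigvee_i\cl(a_i)\cap\op(b_i).
\]
The inequality $\supseteq$ holds because each $(a_i,b_i)\in U$. For $\subseteq$, note that if $(a_i,b_i)\sqsubseteq(c,d)$, then by Lemma~\ref{lemmaordering111} we have $\cl(c)\cap\op(d)\subseteq\cl(a_i)\cap\op(b_i)$. Hence $\psi(U)=\tbigvee_i\cl(x_i)\cap\op(y_i)$.

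By the definition of $\varphi$, $(a,b)\in\varphi(\psi(U))$ if and only if $\cl(a)\cap\op(b)\subseteq\psi(U)$. By the previous two paragraphs this holds if and only if $\cl(x)\cap\op(y)\subseteq\tbigvee_i\cl(x_i)\cap\op(y_i)$. Proposition~\ref{p: what is phipsi explicitly} gives $\varphi(\psi(U))=\ca{A}(U)$, and this yields the equivalence.

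I see no real obstacle. The only points needing care are these.
\begin{itemize}
\item Proposition~\ref{p: what is phipsi explicitly} is stated for upper sets, while the target $\ca{A}(\up\{\dots\})$ is the closure of the upward closure. This is exactly the set $U$ above, so the two match.
\item The degenerate case $I=\emptyset$ must be handled separately, since $\ca{U}$ consists of nonempty upper sets. In that case the left side says $\cl(x)\cap\op(y)=\mathsf{O}$. On the right, $\ca{A}(\emptyset)$ consists of meets of admissible empty families, i.e.\ the top element of $\mf{LC}(L)$ if it is admissible. The top of $\mf{LC}(L)$ is $(1,1)$, which is the canonical representation of $\mathsf{O}$, so both sides say $\mi{lc}(x,y)=(1,1)$.
\end{itemize}
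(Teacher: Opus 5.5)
Your proof is correct and follows essentially the paper's route. The paper invokes the isomorphism $\Sb(L)\cong\ca{AU}(\mf{LC}(L))$ together with the computation of joins in $\ca{AU}(\mf{LC}(L))$ as $\ca{A}$ of unions, which is what you unpack via $\varphi\circ\psi=\ca{A}$ and $\psi(U)=\tbigvee_i\cl(x_i)\cap\op(y_i)$; it then reduces arbitrary pairs to canonical ones through $\mi{lc}$, as you do. Your separate check of $I=\emptyset$ is a detail the paper leaves implicit; there $\ca{A}(\emptyset)=\{(1,1)\}$ because the empty family is always admissible.
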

\begin{proof}
   Whenever $(x,y),(x_i,y_i)\in \mf{LC}(L)$, the desired equivalence holds by the isomorphism in Corollary~\ref{c: isomorphism SbL and LELCL}, and by the computations of joins in $\ca{AU}(\mf{LC}(L))$. For arbitrary $(x,y),(x_i,y_i)\in L\times L^{op}$, it suffices to note that for every pair $(a,b)\in L\times L^{op}$, the pair $\mi{lc}(a,b)$ induces the same locally closed sublocale.
\end{proof}

By an argument completely analogous to that before Theorem~\ref{t: characterize those that lift general}, a frame map $f:L\to M$ lifts to a frame map $\overline{f}:\Sb(L)\to \Sb(M)$ if and only if the following well-definedness condition holds for all $x,y\in L$ and $\{x_i\}_{i\in I},\,\{y_i\}_{i\in I}\subseteq L$.

\begin{equation*}
\cl(x)\cap \op(y)\se \tbigvee_i \cl(x_i)\cap \op(y_i) \text{ implies }\cl(f(x))\cap \op(f(y))\se \tbigvee_i \cl(f(x_i))\cap \op(f(y_i)).   \tag{WDb}
\end{equation*}

A frame homomorphism $f\colon L\to M$ will be said to be \emph{locally exact} if the map $\mathsf{LC}(f)\colon \mathsf{LC}(L)\to\mathsf{LC}(M)$ given by $\mathsf{LC}(f)(a,b)=lc(f(a),f(b))$ is admissible. Concretely, $f$ is locally exact if and only if whenever $\{(a_i,b_i)\}_{i\in I}\subseteq \mathsf{LC}(L)$ is locally exact, then $\{ lc(f(a_i), f(b_i) )\}_{i\in I}\subseteq \mathsf{LC}(M)$ is locally exact and 
\begin{equation}\label{adm.mor}\tbigsqcap_{i\in I} lc(f(a_i), f(b_i)) = lc(f\times f)(\tbigsqcap_{i\in I} (a_i,b_i)).\end{equation}

If we set $$S:= \tbigvee_{i\in I} \cl(a_i)\cap \op(b_i)\quad \text{and} \quad T:= \tbigvee_{i\in I} \cl(f(a_i))\cap\op(f(b_i)),$$
expanding the definition of $\mi{lc}$ and using the formula for the meet $\tbigsqcap$, \eqref{adm.mor} means precisely that $$  \tbigwedge T=f(\nu_{S^\#}(\tbigwedge S ))\to\tbigwedge S $$
and $$\nu_{T^{\#}}(\tbigwedge T)=f(\nu_{S^\#}(\tbigwedge S ))\vee \left(f(\nu_{S^\#}(\tbigwedge S ))\to\tbigwedge S \right)$$
(see Lemma~\ref{l: bwe S for a smooth sublocale S} for the explicit computation of $\nu_{S^{\#}}(\tbigwedge S)$).
\begin{theorem}
A frame map $f:L\to M$ lifts to a frame map $\overline{f}:\Sb(L)\to \Sb(M)$ if and only if it is locally exact.
\end{theorem}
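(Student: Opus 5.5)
The plan is to reduce the statement to Theorem~\ref{t: characterize those that lift general}, applied to the join-semilattice map $\mathsf{LC}(f)\colon\mathsf{LC}(L)\to\mathsf{LC}(M)$. Two intermediate claims are needed first.

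\emph{First}, $\mathsf{LC}(f)$ is a map of join-semilattices. By Lemma~\ref{l: finite joins in LCL}, for $(a,b),(x,y)\in\mathsf{LC}(L)$ we have $(a,b)\sqcup(x,y)=\mi{lc}(a\vee x,b\wedge y)$. Since $f$ preserves finite joins and meets, and $\mi{lc}$ applied to an arbitrary pair yields the same sublocale, we get
\[
\mathsf{LC}(f)\bigl((a,b)\sqcup(x,y)\bigr)=\mi{lc}\bigl(f(a\vee x),f(b\wedge y)\bigr)=\mi{lc}(f(a),f(b))\sqcup\mi{lc}(f(x),f(y)).
\]
The last equality is the ``furthermore'' part of Lemma~\ref{l: finite joins in LCL}. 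So $\mathsf{LC}(f)$ preserves binary joins. The point to be careful about is that $f(a,b)$ need not lie in $\mathsf{LC}(M)$, which is why $\mi{lc}$ is inserted.

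\emph{Second}, lifting along $\op$ for $\Sb$ is the same as lifting along $\up$ for $\ca{AU}$. Corollary~\ref{c: isomorphism SbL and LELCL} gives $\Sb(L)\cong\ca{AU}(\mathsf{LC}(L))$ via $\varphi$, and it sends $\cl(a)\cap\op(b)$ to $\up(a,b)$. A frame map $\overline f\colon\Sb(L)\to\Sb(M)$ satisfies $\overline f\circ\op_L=\op_M\circ f$ if and only if it sends each $\cl(a)\cap\op(b)$ to $\cl(f(a))\cap\op(f(b))$. One direction holds because $\overline f$ is a Boolean-algebra map on complements and meets; the other holds by taking $a=0$. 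Transported along the isomorphisms, this says exactly that the corresponding frame map $\ca{AU}(\mathsf{LC}(L))\to\ca{AU}(\mathsf{LC}(M))$ extends $\mathsf{LC}(f)$, i.e.\ sends $\up(a,b)$ to $\up\mathsf{LC}(f)(a,b)$. Frame maps between Boolean algebras are Boolean, so complements are preserved.

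Equivalently, one can bypass this transport at the level of frame maps and compare well-definedness conditions directly. By Lemma~\ref{l: we link wd with wdb}, the condition \eqref{e: wdb} is literally the instance of \eqref{e: wd'} for $\mathsf{LC}(f)$ with singleton upper sets $\up\mi{lc}(x_i,y_i)$. Moreover, \eqref{e: wd'} for general upper sets reduces to the principal ones, since every $U_i$ is the union of the $\up u$ with $u\in U_i$.

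With both claims in hand, the remarks preceding the statement show that $f$ lifts if and only if \eqref{e: wdb} holds, which is if and only if \eqref{e: wd'} holds for $\mathsf{LC}(f)$. By Theorem~\ref{t: characterize those that lift general}, this holds if and only if $\mathsf{LC}(f)$ is admissible, which by definition means $f$ is locally exact. Corollary~\ref{t: locally exact meet iff exact meet} confirms that ``admissible family'' coincides with ``locally exact family'' in the concrete reformulation.

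The main obstacle I expect is the bookkeeping that $\mi{lc}$ commutes with the relevant operations. This covers checking that $\up f[U_i]$ in \eqref{e: wd'} matches $\up\{\mi{lc}(f(x_i),f(y_i))\}$ in Lemma~\ref{l: we link wd with wdb}. It also covers the case of non-canonical pairs $(x,y)$ in \eqref{e: wdb}, which is handled by replacing them with $\mi{lc}(x,y)$ without changing the sublocale.
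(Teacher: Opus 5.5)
Your proposal is correct and follows essentially the paper's route. Via Lemma~\ref{l: we link wd with wdb}, condition \eqref{e: wdb} becomes the principal-upset instance of \eqref{e: wd'} for $\mathsf{LC}(f)$, and Theorem~\ref{t: characterize those that lift general} turns this into admissibility of $\mathsf{LC}(f)$; your checks that $\mathsf{LC}(f)$ preserves joins and that lifts transport along Corollary~\ref{c: isomorphism SbL and LELCL} only make explicit what the paper leaves implicit. The one justification that is too quick is $\mathsf{LC}(f)\circ\mi{lc}=\mi{lc}\circ(f\times f)$, which you use both for join-preservation and for non-canonical pairs. Since $f$ need not preserve $\to$, this does not follow from $\mi{lc}$ preserving sublocales in $L$; it follows because $f$ preserves $p\le q\to p$ and $q\wedge(q\to p)=q\wedge p$, which give $\cl(f(q\to p))\cap\op(f(q\to p)\vee f(q))=\cl(f(q\wedge p))\cap\op(f(q))=\cl(f(p))\cap\op(f(q))$.
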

\begin{proof}
    A frame map lifts if and only if condition \eqref{e: wdb} holds. But by Lemma~\ref{l: we link wd with wdb}, this is also equivalent to having
    \[
    (x,y)\in \ca{A}(\up \{(x_i,y_i)\mid i\in I\})\text{ implies }\mf{LC}(f)(x,y)\in \ca{A}(\up \{\mf{LC}(f)(x_i,y_i)\mid i\in I\}),
    \]
    using that $\mi{lc}(f(x_i),f(y_i))=\mf{LC}(f)(x_i,y_i)$. This is equivalent to the map $\mf{LC}(f)$ being admissible, as it is a special case of condition \eqref{e: wd'}.
\end{proof}

\end{document}